\documentclass[a4paper,12pt]{article}

\usepackage{arxiv}

\usepackage{soul}
\usepackage{xcolor}

\usepackage[utf8]{inputenc} 
\usepackage[T1]{fontenc}    
\usepackage{hyperref}       
\usepackage{url}            
\usepackage{booktabs}       
\usepackage{amsfonts}       
\usepackage{nicefrac}       
\usepackage{microtype}      
\usepackage{graphicx}
\usepackage{natbib}
\usepackage{doi}
\usepackage{bm}
\usepackage{mathtools}
\usepackage{xcolor}

\usepackage{amsmath,amssymb,amsthm}
\usepackage{macros}

\usepackage{cleveref}

\newcommand{\urlNewWindow}[1]{\href[pdfnewwindow=true]{#1}{\nolinkurl{#1}}}

\theoremstyle{plain}
\newtheorem{thm}{Theorem}[section] 
\newtheorem{lem}[thm]{Lemma} 
\newtheorem{prop}[thm]{Proposition} 
 
\theoremstyle{definition} 
\newtheorem{defn}{Definition}[section] 
 
\newtheorem{ex}{Example}[section]

\title{The Fisher score on the closed simplex}

\date{}	

\author{ 
    Giovanni Pistone \\
	Statistics Initiative\\
	Collegio Carlo Alberto\\
	Torino 10122, Italy \\
    \texttt{giovanni.pistone@carloalberto.org} \\
\And
    Fabio Rapallo \\
	Department of Economics\\
	University of Genova\\
	Genova 16126, Italy \\
    \texttt{fabio.rapallo@unige.it} \\
\AND
    Eva Riccomagno \\
	Department of Mathematics\\
	University of Genova\\
	Genova 16146, Italy \\
    \texttt{eva.riccomagno@unige.it} \\
}

\renewcommand{\headeright}{ }
\renewcommand{\undertitle}{ }

\begin{document}
\maketitle
\begingroup
\renewcommand{\thefootnote}{}
\footnotetext{MSC2020: 62B11, 62R01, 62H17}
\endgroup

\begin{abstract}
We extend classical analytic tools for finite-state statistical models to allow zero probabilities. Using methods from algebraic statistics and information geometry, we develop a framework in which a smooth statistical model could hit the boundary of the simplex, for example, in contingency tables with non-structural zeros. The central object of our approach is the vector bundle whose fibres are the $p$-contrasts associated to each probability distribution $p$. 
In this framework,  Fisher score, and other key statistical concepts such as entropy for one-dimensional statistical models, admit an algebraic representation also on the boundary of the simplex.
\end{abstract}

\keywords{Algebraic statistics \and Contrast spaces \and Information geometry \and One-parameter exponential families \and Score function}

\section{Introduction}

This paper investigates statistical models for categorical data, with particular emphasis on one-parameter models defined on the probability simplex. We concentrate on models that can be expressed within the exponential family and leverage their analytic and geometric structure. In this setting, each model, namely a one-dimensional family of discrete probability distributions, admits a natural representation as a curve on the simplex. This geometric viewpoint provides a unified framework for the systematic study of the statistical, geometric, and algebraic properties of the models considered.

The analysis of these curves is based on the study of the relationship between the tangent space of the model curve and the contrast space, the latter defined as the vector space encoding the relevant log-linear variations in categorical models (see, for instance, the detailed illustration of the statistical models for $2 \times 2$ contingency tables in Example~\ref{ex:quadrato}). The connection between these two spaces provides the fundamental tool for characterising the local geometry of the models and for providing a statistical interpretation of the differential structures induced by exponential parametrizations. From this perspective, the present work belongs to the broader context of the interplay between algebraic statistics and information geometry, contributing to a clearer understanding of how concepts and methods from these two areas can be jointly employed in the study of statistical models~\cite{gibilisco2010algebraic,heaton2025computing,MR3875134, MR4177364, MR3329739}.

Most of the existing literature focuses on the case of the open simplex, where the probability of each category is strictly positive. However, when structural zeros may arise {or zero cells are observed}, it is necessary to consider the closed simplex. 
This situation has major interest in the Compositional Data Analysis framework, see e.g.~\cite{aitchison:1986}, where some `ingredients' might be missing from some compositions but should not be excluded from the analysis. Indeed, the classical geometric approach to compositional data is based on log-ratio transformations, which are not well defined in the case of structural zero. Some extensions to compositional data with zero are introduced in~\cite{martinfernandez2011zeros}.

A way to deal with structural zeros in a contingency table consists into vectorizing the table and excluding cells corresponding to structural zeros. This is done for example in relational models~\cite{klimova2012relational,klimova2022testing} 
and in algebraic log-linear models~\cite{pistone|riccomagno|wynn:2001,drton|sturmfels|sullivan:2009}.
Here we consider a slightly different problem, that is one parameter statistical models which for some values of the parameters lies on the boundary of the simplex.
The main contribution of this paper consists in extending the geometric analysis of one-parameter exponential models from the open simplex to the closed simplex, thereby providing a coherent treatment that also includes boundary cases. To do this, the paper proposes an algebraic translation of objects that are traditionally viewed within the differential geometric approach, that is, in the open simplex. The generalisation of the Fisher score to the closed simplex is a key step, which is achieved through the exploitation of the relationships between the tangent space and the contrast space.

The information geometry of the open probability simplex, originally defined in~\cite{amari|nagaoka:2000}, is the study of an affine space. An affine space is defined, according to~\cite{weyl:1952}, by defining a displacement as a mapping from a pair of points to a vector space. We use here the displacement $(p,q) \mapsto \log \frac q p - \expectat p {\log \frac q p}$ introduced by~\cite{aitchison:1986} and~\cite{pistone|sempi:95}. The affine space allows for a proper calculus and the discussion of statistical models as differentiable varieties. This differential structure fits well with the algebraic statistics approach initiated by~\cite{pistone|riccomagno|wynn:2001}. In fact, monomial statistical models and their toric counterparts are represented in the affine coordinates as affine subspaces. Affine objects such as Segre varieties and Veronese varieties have a clear statistical interpretation. In turn, such an abstract construction provides exactly the mathematics needed by Gibbs statistical physics and by Fisher statistical methodology up to the "Computer Age Statistical Methods" as described in~\cite{efron|hastie:2016}. This topic currently has a substantial body of literature.

The paper is organized as follows. Section~2 addresses the relationship between tangent and contrast spaces, showing that algebraisation of the process is through the contrast space and hinges on a condition on the supports. In particular, expressing the contrast space in coordinate form allows one to derive linear equations. In Section~\ref{sec:score} we show that Fisher scores are contrasts also on the boundary of the simplex. We use this result in Section~\ref{sec:statistical-bundle} to extend the statistical bundle on the closed simplex. Such an extension is exploited in Sections~\ref{sec:natgradient} and~\ref{sect:parallel} to study the behaviour on the boundary of some classical objects of mathematical statistics and information geometry, such as the natural gradient and the parallel transport. In the concluding Section~\ref{sec:final}, we highlight the main findings of this paper and provide pointers to future works.

\section{Tangent bundle and contrast spaces}

To fix notation, we consider a finite set $\Omega$ with $d$ elements and the vector of probabilities $p=(p(x) \ : \ x\in \Omega)$, for example, we could have a categorical random variable $X$ with a finite number $d$ of categories and a random sample from $X$ that falls into exactly category $x$ with probability $p(x)=\operatorname{Probability}(X=x)$.
\begin{defn} \label{def_contrasts}
Let $\Omega$ be a finite set with $d$ elements, and let $\mathbb R$ be the set of real numbers. Define $L(\Omega)=\reals^\Omega$ as the space of real-valued functions on $\Omega$, equivalently identified with $\mathbb{R}^d$.
\begin{enumerate}
    \item The probability simplex is the set of $d$-dimensional real-valued vectors whose components are non-negative and sum to one 
\begin{equation*}
    \Delta(\Omega) = \setof {p \in \reals^\Omega}{p(x) \geq 0, \sum_{x\in \Omega} p(x) = 1} 
\end{equation*}
\item 
The space of contrasts $C(\Omega)$ is the set of zero-sum vectors
\begin{equation*}
C(\Omega) = \setof {p \in \reals^{\Omega}}{ \sum_{x\in \Omega} p(x) = 0} 
\end{equation*}
\item For $p\in \Delta(\Omega)$, the space of $p$-contrast is defined as 
\begin{equation*}
\setof {u \in \reals^{\Omega}}{ 
\expectat{p}{u} = \sum_{x\in \Omega} p(x) u(x) = 0 } 
\end{equation*}
\end{enumerate}
\end{defn}

Item~3 of Definition~\ref{def_contrasts} is used in Section~\ref{sec:score}.
Here we concentrate on the full simplex.
The injection $\Delta(\Omega) \subset \reals^d$ provides the simplex with the structure of an affine space via the displacement function $(p,q) \mapsto q-p \in C(\Omega)$, as 
\begin{equation*}
    \sum_{x\in \Omega} (q(x)-p(x)) = 0 
\end{equation*}
The affine structure naturally gives rise to the notion of a vector space of velocities  (see e.g.~\cite[Ch.~3]{schwartz:1981}).

The relevance for this work is the following. 
Given a reference point $p \in \Delta(\Omega)$ and a differentiable curve $t \mapsto \gamma(t)\in \Delta(\Omega)$, the expression of the curve with respect of the origin $p$ is $t \mapsto \gamma_p(t) = \gamma(t) - p $ which evolves in the vector space $C(\Omega)$ 
because $\sum_{x\in \Omega} (\gamma(x;t)-p(x)) = 0$
where $\gamma(x;t)$ is the $x$ entry of $\gamma(t)\in \reals^\Omega$.
The curve $\gamma$ is seen as a one-parameter statistical model, and $\gamma_p$ is a contrast with respect to the reference point $p$. The velocities of $\gamma$ and $\gamma_p$, defined as their first derivatives with respect to $t$, are equal, that is $\dot \gamma_p(t) = \dot\gamma (t)$, and do not depend on $p$. In the more general setting discussed in this paper, we encounter situations in which the expression for the velocity $t \mapsto \gamma_p(t)$ depends on the reference point. This motivates the following definition of the space of velocities, which coincides with the space of contrasts as shown in Theorem~\ref{theorem_contrast_bundle} below.

For a differentiable curve $t \mapsto \gamma(t)\in \Delta(\Omega)$, an open interval of the real line including zero and for  each $t$ in the open interval, it holds $\sum_{x\in\Omega} \gamma(x;t) = 1$
and  $\sum_{x\in\Omega} \dot \gamma(x;t) = 0$.

\begin{defn}
    \label{defn_tangentbundle}  
\ 
\begin{enumerate} 
\item The tangent space at $p \in \Delta(\Omega) $ is the set of velocities at $p \in \Delta(\Omega) $ and
 is defined as 
\begin{equation*}
 T_p\Delta(\Omega) =
  \setof{v{ \in \mathbb R^{\Omega}}}
{ 
\ \text{{there exist differentiable }}
 \gamma \colon  t \mapsto \gamma(t) \in \Delta(\Omega) , \gamma(0) = p , \gamma \in C^1, \dot \gamma(0)=v  \ 
 }
\end{equation*}  
\item The tangent spaces at each point $p$  are assembled into the tangent bundle, defined as
\begin{equation*}
 T\Delta(\Omega) = \left\{ (p,v){ \in \Delta(\Omega) \times T_p\Delta(\Omega)} \right\}
\end{equation*}  
\end{enumerate}
\end{defn} 

Definition~\ref{defn_tangentbundle} is a very special case of a general structure in differential geometry; see, for example,~\cite{lang:1995}. Given the manifold $M$, the vector space $T_pM$ of velocities of all curves through $p \in M$ is called the tangent space at $p$, while the tangent bundle $TM$ is the bundle of all tangent spaces. At each $p$ the vector space of velocities is called the fibre at $p$ and denoted $T_pM$.

\begin{thm} \label{theorem_contrast_bundle}
Let $\supp v$ be the support of a vector $v\in \Omega$, namely the subset of $\Omega$ for which $v$ has non zero component. Then 
    \begin{equation} \label{eq:tangent-bundle}
 T{\Delta}(\Omega) = \setof{(p,v)}{p \in {\Delta}(\Omega),
 v\in \mathbb R^{\Omega}, \sum_{x\in \Omega} v(x) = 0, \supp v {\subseteq} \supp p} \ ,
\end{equation}
\end{thm}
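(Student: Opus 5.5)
The plan is to prove the two inclusions separately, one fibre at a time.

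\emph{Necessity.} Fix $p\in\Delta(\Omega)$ and $v\in T_p\Delta(\Omega)$, realised by a $C^1$ curve $\gamma$ with $\gamma(0)=p$ and $\dot\gamma(0)=v$. Differentiating $\sum_{x\in\Omega}\gamma(x;t)=1$ at $t=0$ gives $\sum_x v(x)=0$. For the support condition, take $x\notin\supp p$. Then the scalar function $t\mapsto\gamma(x;t)$ is non-negative and vanishes at $t=0$. Since $t=0$ is an interior point of the domain interval, it is an interior minimum, so the derivative there must vanish: $v(x)=\dot\gamma(x;0)=0$. Hence $\supp v\subseteq\supp p$. The one point to handle carefully is that the curve is defined on an open interval containing $0$, so that both one-sided difference quotients are available. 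The right quotient gives $\dot\gamma(x;0)\ge 0$ and the left quotient gives $\dot\gamma(x;0)\le 0$.

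\emph{Sufficiency.} Given $p\in\Delta(\Omega)$ and $v$ with $\sum_x v(x)=0$ and $\supp v\subseteq\supp p$, a straight line $p+tv$ is not enough. It stays in the simplex for small $|t|$ only on $\supp p$, where $p(x)>0$ leaves room. On coordinates where $p(x)=0$ the line stays at $0$ because $v(x)=0$ there, so the straight line does work. Concretely, set $\gamma(t)=p+tv$ for $|t|<\varepsilon$, with
\[
\varepsilon=\min\{p(x)/|v(x)| : x\in\supp v\}
\]
(and any $\varepsilon$ if $v=0$). Then $\sum_x\gamma(x;t)=1$. For $x\in\supp v$ we have $\gamma(x;t)\ge p(x)-|t||v(x)|>0$. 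For $x\notin\supp v$ we have $\gamma(x;t)=p(x)\ge 0$. The curve is smooth, $\gamma(0)=p$, and $\dot\gamma(0)=v$, so $v\in T_p\Delta(\Omega)$.

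Combining both inclusions at every $p$ gives \eqref{eq:tangent-bundle}. The main obstacle is conceptual rather than technical: the interior-minimum argument is what forces tangent vectors at boundary points to preserve zeros. That argument depends on curves being defined on an open interval around $0$ rather than on a half-interval. With one-sided curves, inward-pointing vectors would also be admitted, and the tangent set would become a cone rather than this linear space.
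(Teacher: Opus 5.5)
Your proof is correct, and for the inclusion $\supseteq$ it matches the paper's argument: the straight line $\gamma(t)=p+tv$ stays in $\Delta(\Omega)$ for small $|t|$ because $v$ vanishes off $\supp p$, and your explicit $\varepsilon$ just makes this precise. The paper's proof never argues the inclusion $\subseteq$; your interior-minimum argument, together with differentiating $\sum_x \gamma(x;t)=1$, supplies it, and it is the same reasoning the paper uses later in the proof of Theorem~\ref{prop:score}.
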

\begin{proof} We distinguish whether $p$ is on the boundary of the simplex or not.
If $p$ is not on the boundary of the simplex (that is, $p(x) >0$ for all $x\in \Omega$), then for any contrast  $v$ the linear curve  $\gamma(t) = p + tv$ belongs to the simplex for any $t$ in a neighbourhood of zero, $\gamma(0)=p$, and $\dot \gamma(0)=v$; thus the pair $(p,v)$ is in $T_p\Delta(\Omega)$. 
(This defines the tangent bundle of the interior of the simplex as it is used in information geometry (see, e.g.~~\cite{pistone:2020-NPCS}).) A similar argument holds on the full simplex. If $p$ is on the boundary of the simplex and $v$ is a contrast such that $v(x)=0$ whenever $p(x) = 0$, then $\gamma(t) = p + tv$ belongs to the simplex for any  $t$ in a neighbourhood of zero, $\gamma(0)=p$ and $\dot \gamma(0)=v$; thus also the pair $(p,v)$ is in $T_p\Delta(\Omega)$.
\end{proof}

Theorem~\ref{theorem_contrast_bundle} gives an equivalent definition of the tangent bundles.
Indeed, Equation~\eqref{eq:tangent-bundle} gives a definition of the tangent bundle without reference to the notion of velocity. It shows that the space of contrasts is not sufficient to define the velocities in $T \Delta(\Omega)$ and that it depends on whether the position $p$, to which the velocity is applied, is on the boundary. The key feature is the inclusion of the union of the contrast spaces associated with all open faces, namely, the condition over the supports. 

If $p$ is a vertex, the condition in Equation~\eqref{eq:tangent-bundle} is satisfied only by the vector $v$ with all components equal to zero, corresponding to a degenerate statistical model assigning probability one to the vertex $p$. Indeed, there are no differentiable curves passing through a vertex.

The definition of the tangent bundle given in Equation~\eqref{eq:tangent-bundle} is interesting because the contrast space is inherently algebraic in nature, in two distinct ways. First, the condition of summing to zero is algebraic, although not monomial. Second, each fibre is defined by the vanishing of specific coordinates, representing the simplest possible example of a monomial variety. These aspects are made precise in Section~\ref{coordinate_subspaces}. 

As a conclusion of this section, observe that when a one-parameter statistical model is a differentiable curve, then Theorem~\ref{theorem_contrast_bundle} allows us to translate the geometric notion of tangent and velocity into the algebraic notion of contrast space. This is exploited in the sequel of the paper.

\subsection{Coordinate subspaces}\label{coordinate_subspaces}
The tangent bundle as defined in Equation~\eqref{eq:tangent-bundle} is not the traditional tangent bundle of differential geometry because it is a collection of spaces of different dimensions, one for each face of the simplex. Our definition aligns with an algebraic-geometry description, in that we add to the equations for the space of contrasts (a subspace of $\reals^{\Omega}$) the equations describing the defective supports (the monomial variety of coordinates). In this paper, we are particularly interested in models defined on the boundary of the simplex, and we show how this can be implemented with particular reference to contingency tables.

We proceed in further analysing the structure of Equation~\eqref{eq:tangent-bundle}. In fact, the definition of the tangent bundle of the full simplex requires the introduction of separate vector spaces for each face. 
The proof of Theorem~\ref{theorem_contrast_bundle} suggests that there is a natural decomposition of the tangent bundle along the open faces of the simplex, according to the zero set of elements in $\Delta(\Omega)$. 

Let $C(\Omega) =\setof{v\in \mathbb R^\Omega}{ \sum_{x\in\Omega} v(x)=0}$ be the space of contrasts. 
For each non-empty subset of vertices $I \subseteq \Omega $ define the space of contrasts with support in $I$ 
\begin{equation*}
 C_I(\Omega) = \setof{v \in C(\Omega)}{v(x) = 0 \text{ if } x \notin I} = \spanof{e_x-e_y, x,y \in I} \ ,
\end{equation*}
where $e_x$ is the $d$-dimensional vector with coordinate $1$ in position $x\in I$ and $0$ otherwise. 
Any $C_I$ defines a space of contrasts compatible with the characterisation of the tangent bundle in Theorem~\ref{theorem_contrast_bundle} for the specific set of structural zeros of $I$ in $p$.
Each $C_I(\Omega)$ is a linear sub-variety defined by the equations
\begin{equation*}
    \sum_{x\in \Omega} v(x) = 0  \quad \text{and} \quad v(x)=0 
    \text{ if } x \notin I \ ;
\end{equation*}
that is, it is the intersection of $C(\Omega)$ with the \emph{coordinate sub-variety} given by the zero conditions. We have an intersection of a linear variety with a monomial variety. In applied statistics, for example, in the theory of contingency tables, it is helpful to avoid implicit equations. By introducing a vector basis of the full contrast space, followed by algebraic elimination, one seeks a presentation that entirely avoids the contrast conditions (see Example~\ref{ex:quadrato}).

\begin{ex} \label{ex:quadrato}
Consider the 4-points case, $\Omega = \set{1,2}^2$. Then
\begin{equation*}
    C(\Omega) = \setof{u \in \reals^\Omega}{\sum_{x\in \Omega} u(x) = 0} = \uno^\perp
\end{equation*}
where $\uno$ is the $2\times 2$ matrix with $1$ in each entry and $\uno^\perp$ is its orthogonal space  with respect to the Frobenius scalar product of matrices. 
A vector basis of $C(\Omega)$ is
\begin{align*}
    c_{12} &= e_{11}-e_{12} = \begin{bmatrix}
        1 & -1 \\ 0 & 0
    \end{bmatrix}\\
    c_{21} &= e_{11}-e_{21} = \begin{bmatrix}
        1 & 0 \\ -1 & 0 \end{bmatrix}\\
    c_{22} &= e_{11}-e_{22} = \begin{bmatrix}
        1 & 0 \\ 0 & -1 \end{bmatrix}
\end{align*}
 
Thus a generic contrast can be written as
\begin{equation*}
    c = x\begin{bmatrix}
        1 & -1 \\ 0 & 0
    \end{bmatrix} + y \begin{bmatrix}
        1 & 0 \\ -1 & 0 \end{bmatrix} + z \begin{bmatrix}
        1 & 0 \\ 0 & -1 \end{bmatrix} = \begin{bmatrix}
            x+y+z & -x \\ -y & - z
        \end{bmatrix}
\end{equation*}
and the tangent planes to the four faces are defined by the equations
\begin{equation*}
    x+y+z = 0 \quad x = 0 \quad y = 0 \quad z = 0
\end{equation*}
respectively, which corresponds to the polynomial ideal generated by 
$xyz(x+y+z)$ in the polynomial ring $\mathbb R[x,y,z]$.
That is, the space of contrast in the saturated model for a two-way contingency table
is a real-valued three-dimensional vector space, with basis given by $c_{12},c_{21},c_{22}$. When the coefficients satisfy the conditions $xyz(x+y+z)=0$, then the contrast belongs to a space which is tangent to one of the faces of the three-dimensional simplex.

The direct interpretation in terms of structural zeros of a contingency table can be exemplified as follows. 
Take, as example, the index set $I=\{ (1,2) \} \subset \Omega $ given by $x=0$ and corresponding to  the set of contrasts $ C_{\{ (1,2) \}} = \left[ \begin{array}{cc} y+z & 0 \\ -y & -z  \end{array} \right]$. These are the velocities compatible with all contingency tables that have a structural zero in position $(1,2)$, and the statistical model
\[
\gamma(t) = t \ \left[ \begin{array}{cc} y+z & 0 \\ -y & -z  \end{array} \right]
+ \left[ \begin{array}{cc} p_{11} & 0 \\ p_{21} & p_{22} \end{array} \right] \in \Delta(\Omega) \ , 
\]
with $p_{ij}$ non negative and $p_{11}+p_{21}+p_{22}=1$, 
is well defined for $t$ in a neighbourhood of zero.
\end{ex}

\begin{ex}[Example~\ref{ex:quadrato} continued]
Another possible basis of the space of contrast is given by 
\begin{align*}
    c'_{12} &= e_{11}+e_{21}-e_{12}-e_{22} = \begin{bmatrix}
        1 & -1 \\ 1 & -1
    \end{bmatrix}\\
    c'_{21} &= e_{11}+e_{12}-e_{21}-e_{22} = \begin{bmatrix}
        1 & 1 \\ -1 & -1 \end{bmatrix}\\
    c'_{22} &= e_{11}+e_{22}-e_{12}-e_{21} = \begin{bmatrix}
        1 & -1 \\ -1 & 1 \end{bmatrix}
\end{align*}
where the last vector is the basic Markov move of the independence model, see for instance~\cite{diaconis|sturmfels:98}. {With the basis $\{c'_{12},c'_{21},c'_{22}\}$, the generic contrast can be expressed as
\begin{equation*}
    c = x\begin{bmatrix}
        1 & -1 \\ 1 & -1
    \end{bmatrix} + y \begin{bmatrix}
        1 & 1 \\ -1 & -1 \end{bmatrix} + z \begin{bmatrix}
        1 & -1 \\ -1 & 1 \end{bmatrix} = \begin{bmatrix}
            x+y+z & -x+y-z \\ x-y-z & -x-y+z
        \end{bmatrix}
\end{equation*}
and the tangent space which corresponds to the polynomial ideal generated by 
\[
(x+y+z)(-x+y-z)(x-y-z)(-x-y+z) \, .
\]
}
\end{ex}

\begin{ex}
Consider an element $p$ in the face opposite to $(1,1)$ in the same set-up of Example~\ref{ex:quadrato}, that is, 
\begin{equation*}
   \setof{p \in \Delta(\Omega)}{p(1,1)=0}
\end{equation*}
By Theorem~\ref{theorem_contrast_bundle} the tangent space of a differentiable curve $\gamma: t\mapsto \gamma(t)$ with $\gamma(0)=p$  is in a one-to-one relationship with  the set of contrasts 
\begin{equation*}
    \setof{v \in C(\Omega)}{v(1,1)=0}
\end{equation*}
Similarly, for the other subset of vertices, giving an easy way to write the tangent space of one-parameter statistical models for contingency g=tables with a structural zero in $(1,1)$. 
\end{ex}

\section{Fisher score} \label{sec:score}

The score is a crucial component of Fisherian statistics~\cite{fisher:1922}. We start this section by arguing that the score is interpreted as a velocity in a generalised affine space (see~\cite{pistone:2020-NPCS}), by applying the following definition to a one-dimensional differentiable statistical model in the open simplex, indicated with $\overset{\circ}{\Delta}(\Omega)$. Next, we generalise it to the closed simplex in Theorem~\ref{prop:score}. Affine spaces are systematically used in mathematical modelling, starting from~\cite{weil:1946} (initially published in 1918). 

\begin{defn}\label{def:displacement}
\ 
\begin{enumerate}
\item The displacement from $p$ to $q$ both in $\overset{\circ}{\Delta}(\Omega)$ is defined as
\begin{equation*}
    s_p(q) = \log \frac q p - \expectat p {\log \frac q p} 
\end{equation*}
where $\overset{\circ}{\Delta}(\Omega)$ is the interior set of the simplex and the ratios are defined componentwise.

\item The exponential transport of a random variable $u\in \mathbb R^\Omega$ such that $\expectat p u = 0$ from $p\in \overset{\circ}{\Delta}(\Omega)$ in $q\in \overset{\circ}{\Delta}(\Omega)$ is defined as 
\begin{equation*}
    \etransport p q u = u - \expectat q u \, .
\end{equation*}    
\end{enumerate}
\end{defn}
In Section~\ref{sect:parallel} the notion of transport is discussed in greater detail.
The exponential transport at $q$ of the random variable $u$ is centered in zero with respect to $q$. In particular, the transport from $p$ to $q$ maps the $p$-contrasts to the $q$-contrasts, where a $p$-contrast is any random variable $u$ with $\expectat p u =0$ (see Definition~\ref{def_statEscorebundles}).

Next, for $p,q,r\in \overset{\circ}{\Delta}(\Omega)$ the ``generalised parallelogram rule'' holds
\begin{equation*}
s_p(q) + \etransport q p s_q(r) = s_p(r) \, .
\end{equation*}

Let $t \mapsto \gamma(t)$ be a differentiable curve in the open simplex. The expression of the curve $\gamma$ with origin at $p$ (that is, $\gamma(0)=p$) is
\begin{equation*}
    t \mapsto \gamma_p(t) = s_p(\gamma(t)) = \log \frac {\gamma(t)} p - \expectat p {\log \frac {\gamma(t)} p}
\end{equation*}
and the expression of its first derivative is
\begin{equation*}
    \derivby t \gamma_p(t) = \derivby t \log \gamma(t) - \expectat p {\derivby t \log \gamma(t)} \, .
\end{equation*}
Note the analogy with Section~2, which is also reflected in the notation.
In the usual affine setup the displacement from $p$ to $q$ is $q-p$, hence the derivative $\derivby t (\gamma(t) - p) = \dot \gamma(t)$ does not depend on the origin $p$, as mentioned in Section~2. 
We resort to the moving frame derivative, which does not depend on the origin,
\begin{equation*}
\left. \derivby t \gamma_p(t) \right|_{p=\gamma(t)} = \derivby t \log \gamma(t) \ ,
\end{equation*}
because $\left. \expectat p {\frac {\dot \gamma(t)}p}\right|_{p=\gamma(t)} = \expectat {\gamma(t)}{\derivby t \log \gamma(t)} = 0$.

\begin{defn}
For a one-parameter statistical model given by a differentiable curve   $t \mapsto \gamma(t)$  in the open simplex, the score function is the derivative of $\log \gamma(t)$, namely $\derivby t \log \gamma(t) $.
\end{defn}

Thus, in the precise sense above, the score is a velocity in an affine space where the velocity is the relative rate of change. Theorem~\ref{prop:score} shows how to extend this approach to the closed simplex.

\begin{thm} \label{prop:score}
If $t \mapsto \gamma(t) \in \reals^\Omega$ is a differentiable curve in $\Delta(\Omega)$, then its (ordinary) velocity is a curve $t \mapsto \dot \gamma(t)$ in the space of contrasts $C(\Omega)$. Moreover, for each $t$ there exists $s(t) \in \reals^\Omega$ such that 
\begin{equation}\label{eq:scoreA}
    \dot \gamma(t) = s(t) \ \gamma(t) \quad \text{and} \quad \supp {\dot \gamma(t) }= \supp s(t) \cap \supp \gamma(t)
\ ,\end{equation}
where the first equality in Equation~\eqref{eq:scoreA} is component-wise, i.e. $\dot \gamma(x;t) = s(x;t) \ \gamma(x;t)$  for all $x\in \Omega$.
\end{thm}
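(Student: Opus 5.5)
The argument is componentwise and has three steps.

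\textbf{Step 1: the velocity is a contrast.} Since $\sum_{x\in\Omega}\gamma(x;t)=1$ for every $t$ in the open interval, differentiating term by term (a finite sum) gives $\sum_{x\in\Omega}\dot\gamma(x;t)=0$. Hence $\dot\gamma(t)\in C(\Omega)$. This is the same observation already used before Definition~\ref{defn_tangentbundle}.

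\textbf{Step 2: the support inclusion.} The key claim is that $\supp\dot\gamma(t)\subseteq\supp\gamma(t)$ for each fixed $t$. Fix $t_0$ and a point $x$ with $\gamma(x;t_0)=0$. The scalar function $\tau\mapsto\gamma(x;\tau)$ is differentiable and non-negative, because $\gamma$ takes values in $\Delta(\Omega)$. It therefore attains a minimum at the interior point $t_0$ of the open interval of definition. By Fermat's theorem, $\dot\gamma(x;t_0)=0$. Equivalently, the difference quotients $\gamma(x;t_0+h)/h$ are $\ge 0$ for $h>0$ and $\le 0$ for $h<0$, so their common limit is zero.

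This is exactly the step where openness of the parameter interval matters, and I expect it to be the main point to state carefully. At an endpoint of a closed interval a curve could leave a face with non-zero velocity, so the interior assumption cannot be dropped. The step also recovers, for every $t$, the support condition of Theorem~\ref{theorem_contrast_bundle}, namely $(\gamma(t),\dot\gamma(t))\in T\Delta(\Omega)$.

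\textbf{Step 3: defining the score.} Define $s(t)$ componentwise by
\begin{equation*}
s(x;t)=\begin{cases}\dot\gamma(x;t)/\gamma(x;t) & \text{if } \gamma(x;t)>0,\\ 0 & \text{if } \gamma(x;t)=0.\end{cases}
\end{equation*}
On $\supp\gamma(t)$ this is the usual $\derivby t\log\gamma(x;t)$. We then check the two parts of Equation~\eqref{eq:scoreA} in turn.

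\emph{The product identity.} If $\gamma(x;t)>0$, then $s(x;t)\gamma(x;t)=\dot\gamma(x;t)$ by construction. If $\gamma(x;t)=0$, both sides vanish by Step 2.

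\emph{The support identity.} If $x\in\supp\dot\gamma(t)$, Step 2 gives $\gamma(x;t)>0$. Then $s(x;t)=\dot\gamma(x;t)/\gamma(x;t)\neq0$, so $x\in\supp s(t)\cap\supp\gamma(t)$. Conversely, if $s(x;t)\neq0$ and $\gamma(x;t)\neq0$, then $\dot\gamma(x;t)=s(x;t)\gamma(x;t)\neq0$.

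Note that $s(t)$ is not unique off $\supp\gamma(t)$; the choice $0$ there is simply convenient. Uniqueness is not claimed in the statement.
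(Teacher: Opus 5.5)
Your proof is correct and follows the same route as the paper: the minimum (Fermat) argument at a zero of $t\mapsto\gamma(x;t)$ gives $\dot\gamma(t)\ll\gamma(t)$, and your componentwise ratio, set to $0$ off the support of $\gamma(t)$, is exactly the finite-set Radon--Nikodym derivative the paper invokes. You also spell out what the paper leaves implicit: the contrast property, the check of the support identity, and the requirement that $t$ be an interior point of the parameter interval.
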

\begin{proof}If for some $x\in \Omega$ and some $\bar t$ it holds $\gamma(x;\bar t) = 0$, that is, the curve hits a face of the simplex at time $\bar t$, then the real valued function $t \mapsto \gamma(x;t)$ has minimum at $t=\bar t$, so that $\dot\gamma(x;\bar t) = 0$. This proves the absolute continuity of $\dot \gamma $ with respect to $\gamma$, in short 
$\dot \gamma \ll \gamma$, for each $x\in \Omega$. The conclusion follows from an elementary application of the Radon-Nikodym theorem.
\end{proof}

The statistical argument above is a particular case of a general convexity argument. Namely, if $t \mapsto x(t)$ is a differentiable curve in a closed convex set $K \subset \reals^\Omega$ and $x(\bar t) \in \partial K$, then $\dot x(\bar t)$ belongs to one of the supporting hyperplanes at $x(\bar t)$. Thus, the notion of score fits into the theory of sub-differentials as systematically developed after~\cite{rockafellar:1970}. However, we aim to find an algebraic setup that leverages the model's smoothness. 
The ordinary differential equations in Equations~(\ref{eq:scoreA}) are akin to the ODE obtained by deriving in $x$ the exponential function $f(x) = \exp(s \ x)$.
For $a,b$ and $\gamma$ such that the computations make sense, we have 
$\frac{d}{dt} \gamma (t) = s(t) \ \gamma(t)$ with solution $\gamma(b)  = \gamma(a)  \exp \left( \int_a^b  s(t) \ dt 
\right)  $. In this case $\gamma$ in $a$ and $b$ has the same support, that is the ODE approach assumes constant support, while this paper is concerned with varying support.

If the curve $\gamma$ stays in the interior of the probability simplex, then the original Fisher's definition of the score $s(t)$ holds,
\begin{equation} \label{eq:score}
    s(t) = \frac{\dot \gamma(t)}{\gamma(t)} = \derivby t \log \gamma(t) \, .
\end{equation}
Otherwise, $s(t)$ is uniquely defined on $\supp \gamma(t)$ and arbitrary on the zero-set of $\gamma(t)$.

Let us discuss the notion of score from the point of view of the theory of singularities of statistical models as in~\cite{watanabe:2009}. If the model $\gamma$ is differentiable, then at each point $x \in \Omega$ it holds
\begin{equation*}
  \gamma(x;t) -\gamma(x;s) = (t-s) R_1(x;s,t) \quad \text{where } R_1(x;s,t) = \int_0^1 \dot \gamma(x,s+u(t-s)) \, du 
\end{equation*}
and $\lim_{t \to s} R_1(x;s,t) = \dot \gamma(x;s)$. Assume there is a critical point at $s=0$, $\gamma(x;0) = \dot \gamma(x;0) = 0$, hence
\begin{equation}
     \label{eq:resolution}
\gamma(x;0) = 0 \quad \Rightarrow \quad \gamma(x;t) = 
t R_1(x;0,t) \quad \text{with $\lim_{t \to 0} R_1(x;0,t) = 0$} \, ,    
\end{equation}
and
\begin{equation*}
    \dot \gamma(x;t) = \velocity \gamma(x;t) t R_1(x;0,t) \, 
\end{equation*}
where $\velocity \gamma(t)=s(t)$ (see Definition~\ref{def_Velocity}).

If the model $\gamma$ is twice differentiable, then
\begin{equation*}
    \velocity \gamma(x;t) \gamma(x;t) = \dot \gamma(x;t) = R_1(x;0,t) + t \pderivby t R_1(x;0,t) \ , \quad \lim_{t \to s} \pderivby t R_1(x;{s},t) = \frac12 \ddot \gamma(x;s) \, .
\end{equation*}
Or, we can use the development
\begin{equation*}
    \gamma(x;t) = \gamma(x;s) + (t-s) \dot \gamma(x;s) + (t-s)^2 R_2(x;s,t) \quad \text{with $\lim_{t \to s} R_2(x;s,t) = \frac12 \ddot \gamma(x;s)$} \, ,
\end{equation*}
to get, in the singularity at $s=0$ that
\begin{equation*}
    \gamma(x;t) = t^2 R_2(x;0,t) \, ,
\end{equation*}
hence
\begin{equation*}
    \dot \gamma(x;t) = \velocity \gamma(x;t) t^2 R_2(x;t) \, .
\end{equation*}

\begin{ex}[Entropy production] \label{ex:entropy-1} 

The entropy production rate is the first derivative with respect to time of $l(p)=-p\log p$. In the computation of entropy production along a statistical model, one needs to check the critical points of $l(p)$. 
Let the probability model $\gamma(t)$ have an isolated zero at $(x,0)$. We have $- \derivby t l(\gamma(x;t)) = (\log(\gamma(x;t)) + 1) \dot\gamma(t)$ on non-critical points. Otherwise, assuming $\gamma \in C^2$,
\begin{equation*}
    t^{-1}l(\gamma(x;t)) = t^{-1}(t^2 R_2(t) \log(t^2 R_2(t))) = t^{-1}(l(t^2)R_2(t)+t^2l(R_2)) \to 0 \ \text{ for } t\mapsto 0\, .
\end{equation*}
 Hence, the entropy function is differentiable at $t=0$.
 \end{ex}
 
\begin{ex}[Square root transformation] The transformation $p \mapsto 2 \sqrt p = \rho$ maps the probability simplex to the sphere of radius 2, which is a Riemannian manifold. It is a way to induce a Riemannian structure on the probability simplex. The statistical application of such a transformation is discussed, for example, in~\cite{amari:2009}. The derivative on the sphere is related to the score on the simplex by
 \begin{equation*}
  \derivby t \rho(t) = \derivby t 2 \sqrt {\gamma(t)} = \frac {\velocity \gamma(t) \gamma(t)} {\sqrt{\gamma(t)}} = \frac12 \velocity \gamma(t) \rho(t) \, .  
 \end{equation*}
\end{ex}

\subsection{The equation of velocity}

Next, we express the properties of the score in algebraic terms by  means of 
Theorem~\ref{prop:score}.  
Given a differentiable curve $\gamma\in \Delta(\Omega)$, for each $t$ its first derivative is the product of an element in the space of contrasts and the curve itself.

In the ring $\reals^\Omega$, the set of all vectors of the form $s\gamma$ is the ideal generated by $\gamma$. Hence, the derivative satisfies
\begin{equation*}
    \dot \gamma \in C(\Omega) \cap \idealof{\gamma} \, 
\end{equation*}
where $\idealof {\gamma(t)}$ is loosely defined as the set of all functions from $\Omega$ to $\reals$ which are a multiple of $\gamma(t)$ for any given $t$. This property implies that the support of the derivative is equal to, or contained within, the support of the model curve $\gamma$.
Conversely, the set of all scores $s$ expressing the same velocity is defined by the equations
\begin{equation*}
    \sum_{x\in \Omega} \gamma(x) = 1,\  \sum_{x\in \Omega}  \dot\gamma(x) = 0, \  \dot \gamma(x) = s(x) \gamma(x) \text{ for all } x\in \Omega \, .
\end{equation*}

To solve the singularity in Equation~\eqref{eq:score}, given a $\gamma \in \Delta(\Omega)$, let $S(t)$ be the indicator function of the support of $\gamma$ at time $t$. For each $t$, $S(t):\Omega \mapsto \reals$ is such that
\begin{equation}
    \label{eq:support}
S^2 = S \ , \quad  S\gamma = \gamma  \ , \quad
S \in \idealof \gamma \, , 
\end{equation}
For each $ S^2-S$, the set of all $\gamma \in \Delta(\Omega)$ such that Equation~\eqref{eq:support} holds coincides with the open simplex on $S$.
Equation~\eqref{eq:support}
shows that for a smooth curve in $\Delta(\Omega)$ it holds $\dot \gamma(t) \in \idealof {\gamma(t)}$. Indeed, this follows by considering the real-valued vector space isomorphism between the set of real-valued functions defined on $\Omega$ and the ring of polynomials in as many indeterminates as the cardinality of $\Omega$, modulo the ideal generated by the sum-to-one condition~\cite{pistone|riccomagno|wynn:2001}.

 In particular, we can generalise the definition of velocity as $\velocity \gamma = \dot \gamma / \gamma$ to the boundary of the simplex as follows.
\begin{defn}[Velocity]\label{def_Velocity}
 If $ S(t) = \supp \gamma(t)$ 
and $S(t) = \check \gamma(t) \gamma(t)$, then the velocity $\velocity\gamma(t)$ is defined by any curve $t \mapsto (\gamma(t),\velocity \gamma(t))$ such that 
 \begin{equation}
     (\velocity \gamma(t) - \dot \gamma(t) \check \gamma(t)) \ \gamma(t) = 0 \, .
 \end{equation}
 \end{defn}

The difference with the standard case as discussed, for example, in~\cite{chirco|pistone:2022}, is that we allow for variable support and, consequently, for variable dimension of the tangent space. In the case of open simplex, the score was $\velocity \gamma(t) = \derivby t \log \gamma(t)$. The $*$-notation has been introduced in the literature about non-parametric information geometry. See, for example, the tutorial~\cite{pistone:2020-NPCS}.

Moreover, the expected value of $s(t) = \velocity \gamma(t)$ with respect to $\gamma(t)$ is zero for all $t$, indeed 
\begin{equation*}
    \expectat {\gamma(t)} {s(t)} = \sum_{x\in \Omega} s(x;t)\gamma(x;t) = \sum_{x \in \Omega} \dot\gamma(x;t) = 0 \, .
\end{equation*}

\subsection{Examples}

In the following examples we consider models described by polynomial equations. By applying the differentiation rule in Equation~(\ref{eq:scoreA}), according to which the velocity is given by the product of the curve and the score, we directly obtain the corresponding equations of the model in the tangent bundle.

\begin{ex}[a line in the two-dimensional simplex] 
Let $\Omega=\{1,2,3\}$. The following curve gives a probability density function in $\Delta(\Omega)$ for  each $t\in [0,1/2] $ 
\begin{equation*}
  p_1 = t \qquad p_2 = t \qquad p_3 = 1 - 2 t  
\end{equation*}
which hits a one-dimensional face of $\Delta(\Omega)$ at $t=1/2$ and a vertex of  $\Delta(\Omega)$  at $ t=0$. 
Its first derivative in $t$ is $(1,1,-2)$. 
The implicit polynomial form of this one dimensional statistical model is
\begin{equation}\label{ex:line3}
p_1-p_2= 0 =p_3+2 p_1-1
\end{equation}
To the model Equations~\eqref{ex:line3}
we add the tangent conditions $\dot p_i = s_i\ p_i$ for $i=1,2,3$, the sum-to-one constraint and the constraint on the velocities
\[
 p_1-p_2= 0 =p_3+2 p_1-1 \qquad
\dot p_i = s_i\ p_i \text{ for } i=1,2,3 \qquad
p_1+p_2+p_3-1 = 0 = \dot p_1 + \dot p_2 + \dot p_3
\]
We have considered the ideal in $\mathbb R[p,\dot p,s]$ generated by 
\[
p_1-p_2, \quad p_3+2 p_1-1 ,\qquad
\dot p_i - s_i\ p_i \text{ for } i=1,2,3   \qquad
p_1+p_2+p_3-1 ,\quad \dot p_1 + \dot p_2 + \dot p_3
\] 
In the differential ring with derivation rules $\dot p_i=s_i p_i$, we retrieve 
$\dot p_1-\dot p_2, \ \dot p_3+2\dot p_1$ which is as expected because on the model the equalities
$\dot p_1-\dot p_2 = 0= \dot p_3+2\dot p_1$  hold also when $t=0$ or $1/2$.
Written in the probability and score variables the model is given by the two polynomial equations 
$s_1p_1-s_2p_2 = 0=s_3p_3 + 2s_1p_1=0$ 
which provide the implicit expression of the tangent bundle.
\end{ex}

\begin{ex}[$2 \times 2$ contingency table] \label{ex_indep} 
The computations above can be applied to two-parameter statistical models such as the $2 \times 2$ independence model, which is used here for a preliminary computation for Example~\ref{ex_marghomo}. 
Let $p_{ij}$, $i,j=1,2$ denote the cell probabilities and consider the ring $\mathbb{R}[p_{ij},\dot{p}_{ij},s_{i,j} \ : \ i,j=1,2]$. We stress that the $p$'s are probabilities and the $\dot p$'s are contrasts, hence
\begin{gather}
    p_{11} + p_{12} + p_{21} + p_{22} = 1 \label{eq:ex-1} \\ 
    \dot p_{11} + \dot p_{12} + \dot p_{21} + \dot p_{22} = 0 \label{gathercond}
\end{gather}
Under the marginal independence model, we have two types of equations:
\begin{enumerate}
\item a binomial equation for the independence condition 
\begin{equation}\label{mod:indep}
p_{11}p_{22}-p_{12}p_{21}=0
\end{equation}
\item and the equations for the conditions that define the scores
\begin{equation}\label{P:cond}
    \dot{p}_{ij}=s_{ij}p_{ij}  \qquad \qquad i,j=1,2
\end{equation}
\end{enumerate}
For $p$ variables differentiable functions of one-parameter, deriving with respect to the parameter Equation~\eqref{mod:indep} and using Equation~\eqref{P:cond} we obtain
\begin{gather}
\dot{p}_{11}p_{22}+p_{11}\dot{p}_{22}- \dot{p}_{12}p_{21}-p_{12}\dot{p}_{21} = 0
\nonumber \\ 
s_{11}p_{11}p_{22}+p_{11}s_{22}p_{22}- s_{12}p_{12}p_{21}-p_{12}s_{21}p_{21} = (s_{11}+s_{22})p_{11}p_{22}- (s_{12}+s_{21})p_{12}p_{21} = 0 \label{eq:ex-3}
\end{gather}
We should add by Equations~\eqref{gathercond} and~\eqref{P:cond}
the Fisher score conditions and obtain
\begin{equation}
s_{11}p_{11} + s_{12}p_{12} + s_{21}p_{21} + s_{22}p_{22} = 0 \label{eq:ex-4}
\end{equation}
Equations~\eqref{eq:ex-3} and~\eqref{eq:ex-4} express the model in the probability and score variables. 
In the open simplex, since $p_{11}p_{22}=p_{12}p_{21} \neq 0$ we get a simple equation involving only the score variables
\begin{equation*}
s_{11}+s_{22}-s_{12}-s_{21} = 0 \ . 
\end{equation*}
This is consistent with the classical result that, under independence, scores are linearly related.
\end{ex}

\begin{ex}($2\times2$ table with independence and marginal homogeneity).
\label{ex_marghomo}
We modify the independence model in Example~\ref{ex_indep} by adding the condition $p_{12}-p_{21}=0$, which in explicit form is 
\begin{gather*}
    p_{11} = (1-\theta)^2 \\
    p_{12} = p_{21} = \theta(1-\theta)\\
    p_{22} = \theta^2
\end{gather*}
Thus we work with three probabilities using the equations
\begin{enumerate}
\item the model 
\begin{equation}\label{mod:indep:s}
p_{11}p_{22}-p_{12}^2=0
\end{equation}

\item scores conditions
\begin{equation}\label{P:cond:s}
    \dot{p}_{ij}=s_{ij}p_{ij}  \qquad \qquad (i,j) \in \{(1,1),(1,2),(2,2)\}
\end{equation}
\end{enumerate}
For $p$ variables differentiable functions of one-parameter, deriving Equation~\eqref{mod:indep:s} we have
\begin{equation*}
\dot{p}_{11}p_{22}+p_{11}\dot{p}_{22}- 2p_{12}\dot{p}_{12} = 0
\end{equation*}
and using Equation~\eqref{P:cond:s}, we obtain the model equations in the $p$ and $s$ variables
\begin{equation*}
s_{11}p_{11}p_{22}+p_{11}s_{22}p_{22}- 2s_{12}p_{12}^2 = 0
\end{equation*}
\begin{equation*}
(s_{11}+s_{22})p_{11}p_{22}- 2s_{12}p_{12}^2 = 0
\end{equation*}
In the open simplex, since $p_{11}p_{22}=p_{12}^2$ we get a linear condition on the scores
\begin{equation*}
s_{11}+s_{22}-2s_{12} = 0
\end{equation*}
\end{ex}
This fact is generalised in the following results for models that can be expressed as binomial equations. 

\begin{lem}
Let $p_i$ be indeterminates and $\dot p_i=s_i \ p_i$ for all $i$. 
Given a monomial $p^\alpha$ then its derivative is $\langle \alpha, s \rangle p$.
\end{lem}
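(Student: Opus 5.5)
The statement contains a typo: the derivative should read $\langle \alpha, s\rangle\, p^\alpha$, not $\langle \alpha, s\rangle\, p$. This is what the examples show. In Example~\ref{ex_indep} the monomial $p_{11}p_{22}$ has derivative $(s_{11}+s_{22})p_{11}p_{22}$, and in Example~\ref{ex_marghomo} the monomial $p_{12}^2$ has derivative $2s_{12}p_{12}^2$. I will prove the corrected identity
\begin{equation*}
\derivby t\, p^\alpha = \langle \alpha, s\rangle\, p^\alpha , \qquad p^\alpha=\prod_i p_i^{\alpha_i},\ \alpha\in\mathbb{N}^d .
\end{equation*}

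I would argue in the differential ring in which the derivation acts by the rule $\dot p_i = s_i p_i$. This is the same formal setting used in the examples.

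The proof is an induction on the total degree $|\alpha|=\sum_i\alpha_i$.
\begin{itemize}
\item For $|\alpha|=0$ we have $p^\alpha=1$, whose derivative is $0=\langle 0,s\rangle\cdot 1$.
\item For $|\alpha|=1$ we have $p^\alpha=p_i$. Its derivative is $\dot p_i=s_ip_i=\langle e_i,s\rangle p_i$ by hypothesis.
\item For the inductive step, write $p^\alpha = p_i\,p^{\alpha-e_i}$ for some $i$ with $\alpha_i\ge 1$. By the Leibniz rule,
\begin{equation*}
\derivby t\, p^\alpha = \dot p_i\, p^{\alpha-e_i} + p_i\,\derivby t p^{\alpha-e_i} = s_i p^\alpha + \langle \alpha-e_i, s\rangle p^\alpha = \langle\alpha,s\rangle p^\alpha .
\end{equation*}
\end{itemize}
An equivalent direct route is to expand the Leibniz rule over all factors: $\derivby t\, p^\alpha=\sum_i \alpha_i p_i^{\alpha_i-1}\dot p_i \prod_{j\ne i}p_j^{\alpha_j}$. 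Substituting $\dot p_i=s_ip_i$ then gives each term as $\alpha_i s_i p^\alpha$.

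No step is a real obstacle. The one point to check is that the identity must hold on the closed simplex, where some $p_i$ vanish, and not only in the interior. In the interior one could simply use $\log p^\alpha=\sum_i\alpha_i\log p_i$, but the logarithm fails on the boundary.

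The induction above never divides by $p_i$. It uses only the Leibniz rule and the relation $\dot p_i=s_ip_i$, which Theorem~\ref{prop:score} guarantees on the whole closed simplex. So the identity holds for any differentiable curve in $\Delta(\Omega)$, with $s$ any admissible score in the sense of Definition~\ref{def_Velocity}.

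Because $s$ is arbitrary off the support, $\langle\alpha,s\rangle$ is not uniquely determined at boundary points. The product $\langle\alpha,s\rangle p^\alpha$ is nevertheless well defined there. Indeed, if $p_i=0$ with $\alpha_i\ge1$, then $p^\alpha=0$, so the arbitrary value of $s_i$ does not affect the product.

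The identity extends by linearity to binomials $p^\alpha-p^\beta$:
\begin{equation*}
\derivby t\,(p^\alpha-p^\beta)=\langle\alpha,s\rangle p^\alpha-\langle\beta,s\rangle p^\beta .
\end{equation*}
On the open part of the toric model, where $p^\alpha=p^\beta\neq0$, this gives the linear score condition $\langle\alpha-\beta,s\rangle=0$ seen in the examples.
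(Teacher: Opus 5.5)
Your proof is correct and follows the same route as the paper, whose proof is a one-line appeal to the Leibniz product rule; your induction, or the direct expansion, simply writes that appeal out. You are also right that the conclusion should read $\langle \alpha, s\rangle\, p^\alpha$ rather than $\langle \alpha, s\rangle\, p$, as the computations in Examples~\ref{ex_indep} and~\ref{ex_marghomo} confirm.
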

\begin{proof}
This follows by the Leibniz product rule for derivatives.
\end{proof}

\begin{thm} \label{bin:eq}
Given a binomial equation $p^\alpha-p^{\beta}=0$, then on the open simplex:
\begin{equation}\label{s:cond}
\langle (\alpha - \beta), s \rangle = 0
\end{equation}
\end{thm}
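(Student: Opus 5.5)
The plan is to differentiate the binomial relation along a differentiable curve $t \mapsto \gamma(t)$ lying in the open simplex and satisfying $\gamma(t)^\alpha - \gamma(t)^\beta = 0$ for all $t$. Then we divide by the common nonzero value of the two monomials. Throughout, $s = s(t)$ is the score of Theorem~\ref{prop:score}. In the open simplex it is uniquely given by Equation~\eqref{eq:score}, $s = \dot\gamma/\gamma$, so $\dot p_i = s_i p_i$ holds for every coordinate $i$.

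The first step applies the preceding Lemma. Read correctly, it says that the derivative of a monomial is $\derivby t p^\alpha = \langle \alpha, s\rangle\, p^\alpha$; the final $p$ in the statement should be $p^\alpha$. To check this, use the Leibniz rule,
\begin{equation*}
\derivby t \prod_i p_i^{\alpha_i} = \sum_i \alpha_i p_i^{\alpha_i - 1}\dot p_i \prod_{j\neq i} p_j^{\alpha_j} = \sum_i \alpha_i s_i \, p^\alpha ,
\end{equation*}
which is valid even when $\alpha_i = 0$, since those terms simply vanish. Differentiating the identity $p^\alpha - p^\beta = 0$ along the curve therefore gives
\begin{equation*}
\langle \alpha, s\rangle\, p^\alpha - \langle \beta, s\rangle\, p^\beta = 0 .
\end{equation*}
This mirrors the computations in Examples~\ref{ex_indep} and~\ref{ex_marghomo}.

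The second step uses the model equation $p^\beta = p^\alpha$ to factor the relation as $\langle \alpha - \beta, s\rangle\, p^\alpha = 0$. In the open simplex every $p_i > 0$, so $p^\alpha > 0$. Dividing by it yields Equation~\eqref{s:cond}.

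The only real point of care is the interpretation of the statement. The identity has to be read along a differentiable curve that stays in the model, or equivalently for every tangent direction of the model at $p$. In the latter reading, $s$ is the score of some curve through $p$ inside the variety $p^\alpha = p^\beta$. The division step is exactly where the open-simplex hypothesis is needed: on the boundary $p^\alpha$ may vanish, and only the weaker relation $\langle \alpha - \beta, s\rangle\, p^\alpha = 0$ survives. I expect no substantive obstacle beyond stating this clearly.
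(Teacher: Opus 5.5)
Your proposal is correct and follows the paper's own argument: differentiate the binomial relation along a curve in the model, apply the monomial-derivative lemma, and use positivity on the open simplex to conclude. You also make explicit two things the paper leaves implicit: the lemma should read $\langle \alpha, s\rangle p^\alpha$ rather than $\langle \alpha, s\rangle p$, and the last step is the factorisation $\langle \alpha-\beta, s\rangle\, p^\alpha = 0$ followed by division by $p^\alpha > 0$.
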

\begin{proof}
    The derivative of $p^\alpha-p^{\beta}=0$ must be zero and the previous lemma implies that the linear equation on $s$ must be true on the open simplex.
\end{proof}

As a corollary, for the special case of binomial models Theorem~\ref{bin:eq} gives and alternative set of equations involving only the indeterminates $p$ and $s$ (while $\dot{p}$ has been eliminated).

\section{Statistical bundle} \label{sec:statistical-bundle} 

In this section we redefine the bundle of velocities following the notion of Fisher score as a velocity as discussed in Section~\ref{sec:score}. 
In Theorem~\ref{thm:statistical-bundle}  we show the consistency of the new notion. Exponential families provide a natural mapping from the fibres of the bundle to the probability simplex. We use one-dimensional exponential families in the form
\begin{equation*}
    \gamma(t) = \euler^{ts - \psi(t)} \cdot p \ , \quad \expectat p s = 0 \ , \quad \psi(t) = \log \expectat p {\euler^{ts}} \, .
\end{equation*} 
See~\cite{brown:86} on exponential families.~\cite{pistone|sempi:95} used the $K_p$-notation to denote the cumulant function. That is,
\begin{equation*}
    q = \euler^{u - K_p(u)} \cdot p \ , \quad \expectat p u = 0 \ , \quad K_p(u) = \log \expectat p {\euler^u} \, .
\end{equation*}

\begin{defn} \label{def_statEscorebundles}
Let $L(\Omega)=\reals^\Omega$ be the vector space of real valued functions on $\Omega$.
\begin{enumerate}
\item 
The statistical bundle is the set of pairs
\begin{equation} \label{eq:statistical-bundle}
\expbundleat \Omega = \setof{(p,s)}{p \in \Delta(\Omega), s \in L(\Omega) \text{ such that }\expectat p s = 0} \, .
\end{equation}

The fibre at $p$ is $\expfiberat p \Omega = L_0(p) = \setof{s \in L(\Omega)}{\expectat p s = 0}$. Each fibre is endowed with the bilinear form $\scalarat p s v = \expectat p {s v} = \covat p s v$ for $s, v\in L_0(p)$.
\item 
The score bundle is the union over $p\in \Delta(\Omega)$ of the $p$-score fibre, defined as  
\[ 
\setof{s \in L(\Omega)}{ \text{there exists } \dot p\in C(\Omega) \text{ such that }\dot p = s p } \ . 
\]
\end{enumerate}
\end{defn}
Note that Theorem~\ref{theorem_contrast_bundle} and Definition~\ref{def_statEscorebundles} are akin, one is formulated in algebraic–geometric language  and the other one in probabilistic–statistical language. 
Theorem~\ref{thm:statistical-bundle} below shows that the statistical bundle is indeed a tangent bundle.

\begin{thm}\label{thm:statistical-bundle} 
The statistical bundle is equal to the score bundle.
\end{thm}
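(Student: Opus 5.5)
The plan is to prove the equality fibrewise: for every $p\in\Delta(\Omega)$ I will show that the $p$-score fibre
\[
\setof{s\in L(\Omega)}{\text{there exists } \dot p\in C(\Omega) \text{ such that } \dot p = s\,p}
\]
coincides with $L_0(p)=\setof{s\in L(\Omega)}{\expectat p s = 0}$. Both bundles are unions over the same base $\Delta(\Omega)$, so equality of all fibres gives equality of the bundles. Each inclusion comes essentially from one identity,
\[
\sum_{x\in\Omega} s(x)p(x)=\expectat p s .
\]

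For the inclusion of the score bundle in the statistical bundle, take $s$ with $\dot p = sp$ for some $\dot p\in C(\Omega)$. Summing over $\Omega$ gives $\expectat p s=\sum_x \dot p(x)=0$, so $s\in L_0(p)$. This is the same computation already done after Definition~\ref{def_Velocity}.

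For the converse, take $s\in L_0(p)$ and set $\dot p := sp$ componentwise. Then $\sum_x \dot p(x)=\expectat p s=0$, so $\dot p\in C(\Omega)$ and $s$ lies in the $p$-score fibre.

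I also want to show that these fibres really are velocities, so that the statement deserves the name ``tangent bundle''. First, $\supp \dot p\subseteq \supp p$, so by Theorem~\ref{theorem_contrast_bundle} the pair $(p,\dot p)$ lies in $T\Delta(\Omega)$. Second, I will exhibit a curve whose score is $s$. I will use the exponential family
\[
\gamma(t)=\euler^{ts-\psi(t)}\cdot p, \qquad \psi(t)=\log\expectat p{\euler^{ts}} .
\]
This curve stays in $\Delta(\Omega)$, and $\supp\gamma(t)=\supp p$ for all $t$. Its derivative is $\dot\gamma(t)=(s-\dot\psi(t))\gamma(t)$ with $\dot\psi(0)=\expectat p s=0$. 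Hence $\dot\gamma(0)=sp$, so $s$ is a score at $t=0$ in the sense of Theorem~\ref{prop:score}.

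The main obstacle is not the algebra but the non-uniqueness of the score on the boundary. When $p(x)=0$, the value $s(x)$ is arbitrary in both fibres: it is unconstrained by $\expectat p s=0$, and it is unconstrained by $\dot p=sp$. I will point out that both fibres contain the same free coordinates on $\Omega\setminus\supp p$, so the equality is an equality of the full sets, not only modulo that ambiguity. The same argument also works at vertices: there $\dot p=0$, and the fibre consists of all $s$ with $s(x)=0$ at the vertex $x$.
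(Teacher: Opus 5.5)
Your proposal is correct and matches the paper's proof. The inclusion of the score bundle in the statistical bundle is the same summation $\sum_x \dot p(x)=\sum_x s(x)p(x)$, and for the converse the paper uses exactly your exponential family $\gamma(t)\propto e^{ts}\,p$ with $\dot\gamma(0)=sp$.

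Your preliminary observation that $\dot p:=sp$ already lies in $C(\Omega)$ is a quicker route to the literal set equality. The exponential family, which the paper relies on alone, is what additionally shows that $sp$ is the velocity of an actual curve through $p$.
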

\begin{proof}
If $\dot p = sp$, then $\expectat p s = \sum_x \dot p(x) = 0$. Conversely, let $p \in \Delta(\Omega)$, and let $s \in L(\Omega)$ be such that $\expectat p s = 0$. Define the exponential family
\begin{equation*}
    \gamma(t) = \euler^{ts - \psi(t)} \cdot p \, .
\end{equation*} 
Then, 
\begin{equation*}
    \left. \derivby t \gamma(t) \right|_{t=0} = \left. \left(s - \derivby t \psi(t)\right) \gamma(t) \right|_{t=0} = 
    \left. \left( s -\expectat {\gamma(t)} s \right) \gamma(t) \right|_{t=0} =\  sp \, .
\end{equation*}
The last equality holds because $\gamma(0)= p$ and $\expectat {p} s =0$.
\end{proof}

Theorem~\ref{thm:statistical-bundle} shows that, for any $p \in \Delta(\Omega)$, the vector space $L_0(p) = \setof{u \in L(\Omega)}{\expectat p u = 0}$ equals the fibre of all possible Fisher scores at $p$. The space $L_0(p)$ does not represent \emph{uniquely} all scores because if $u,v \in L_0(p)$ and $(u-v)p=0$, that is, $u$ and $v$ are equal on $\supp p$ but differ otherwise, they represent the same score. To ensure uniqueness consider the equivalence relation
\begin{equation} \label{eq:equivalence}
    (p,u), (p,v) \in \expbundleat \Omega \ , \quad (p,u) \sim (p,v) \Leftrightarrow (u-v)p = 0 \ ,
\end{equation}
that is, $u = v$ on $\supp p$.
\begin{defn}
The \emph{reduced statistical bundle} is the quotient of the statistical bundle for the relation $\sim$,
\begin{equation} \label{eq:red-statistical-bundle}
\rexpbundleat \Omega = \setof{(p,u)}{p \in \Delta(\Omega),  \ u \in \frac{L_0(p)}{\sim} = L_0(\supp p,p) \ } \ ,
\end{equation}
where $L_0(\supp p,p) = \setof{u \in L(\supp p)}{\expectat p u = 0}$ is an Hilbert space for $(u,v) \mapsto \scalarat p u v$.
\end{defn}

The statistical bundle inherits the vector space structure over the reals from that of $L_0(p)$. Moreover, it is described by the semi-algebraic conditions,
\begin{equation*}
 \sum_x p(x) = 1, \ \sum_x u(x)p(x) = 0, \ p(x) \geq 0 \, .   
\end{equation*}

The following proposition gives an interpretation of the score as a velocity in the statistical bundle.
\begin{prop}
Let $t \mapsto \gamma(t)$ be a $C^1$ curve in the probability simplex. Let $F \colon \reals^\Omega \to \reals$, $F \in C^1$ be an implicit relation on the model, that is, $F(\gamma(t)) = 0$ for all $t$. Then for the gradient of $F$, $\nabla F$, it holds
\begin{equation*}
    \covat {\gamma(t)} {\nabla F(\gamma(t)}{\velocity \gamma(t)} = 0 \, .
\end{equation*}
\end{prop}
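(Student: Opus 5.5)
The plan is to differentiate the implicit relation along the curve with the chain rule, and then use the factorisation of the velocity provided by Theorem~\ref{prop:score} to rewrite the result as a $\gamma(t)$-expectation. Since $F \in C^1$ and $t \mapsto \gamma(t)$ is $C^1$, the map $t \mapsto F(\gamma(t))$ is differentiable. Because it vanishes identically, its derivative is zero for every $t$:
\begin{equation*}
    0 = \derivby t F(\gamma(t)) = \sum_{x\in\Omega} \partial_x F(\gamma(t)) \, \dot\gamma(x;t) \, .
\end{equation*}
This step holds at every $t$, including times at which $\gamma(t)$ lies on the boundary of the simplex. The reason is that the chain rule needs only differentiability of $F$ on all of $\reals^\Omega$; no logarithms are involved.

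Next, I would invoke Theorem~\ref{prop:score}, in the form of Definition~\ref{def_Velocity}, to write $\dot\gamma(x;t) = \velocity\gamma(x;t)\,\gamma(x;t)$ for all $x\in\Omega$. Substituting this into the sum gives
\begin{equation*}
    0 = \sum_{x\in\Omega} \partial_x F(\gamma(t))\, \velocity\gamma(x;t)\, \gamma(x;t) = \expectat{\gamma(t)}{\nabla F(\gamma(t))\, \velocity\gamma(t)} \, .
\end{equation*}
The quantity on the right is unaffected by the arbitrariness of $\velocity\gamma(t)$ outside $\supp\gamma(t)$. Those components are multiplied by $\gamma(x;t)=0$, so the identity holds for any representative of the velocity, and hence it is well defined on the reduced statistical bundle.

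Finally, I would pass from the expectation of the product to the covariance. By definition,
\begin{equation*}
    \covat{\gamma(t)}{\nabla F(\gamma(t))}{\velocity\gamma(t)} = \expectat{\gamma(t)}{\nabla F\, \velocity\gamma} - \expectat{\gamma(t)}{\nabla F}\,\expectat{\gamma(t)}{\velocity\gamma} \, .
\end{equation*}
It was shown after Definition~\ref{def_Velocity} that $\expectat{\gamma(t)}{\velocity\gamma(t)} = \sum_x \dot\gamma(x;t) = 0$. This is also the statement that $(\gamma(t),\velocity\gamma(t))$ belongs to the statistical bundle, by Theorem~\ref{thm:statistical-bundle}. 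The second term therefore vanishes, and the covariance equals the expectation of the product, which is zero by the previous step.

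I do not expect a real obstacle here. The only delicate point is the boundary case, and it is already handled by Theorem~\ref{prop:score}: that theorem guarantees $\dot\gamma \ll \gamma$, so that $\dot\gamma = \velocity\gamma\,\gamma$ holds even where $\gamma(x;t)=0$. I would stress this in the write-up, since it is exactly what makes the identity valid on the closed simplex rather than only on $\overset{\circ}{\Delta}(\Omega)$.
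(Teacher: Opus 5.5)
Your proof is correct and follows the paper's argument. You differentiate $F(\gamma(t))=0$ by the chain rule, substitute $\dot\gamma = \velocity\gamma\,\gamma$ from Theorem~\ref{prop:score} to get $\expectat{\gamma(t)}{\nabla F(\gamma(t))\,\velocity\gamma(t)}$, and identify this with the covariance using $\expectat{\gamma(t)}{\velocity\gamma(t)}=0$; your remark that components of $\velocity\gamma(t)$ off $\supp\gamma(t)$ are multiplied by $\gamma(x;t)=0$ makes explicit a representative-independence that the paper leaves implicit.
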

\begin{proof}
We have $\expectat{\gamma(t)}{\velocity \gamma(t)} = 0$, hence
    \begin{equation*}
        0 = \derivby t F(\gamma(t)) = \sum_x \pderivby {\gamma(x)} F(\gamma(x;t)) \velocity \gamma(x;t) \gamma(x;t) = \expectat {\gamma(t)} {\nabla F(\gamma(t)) \velocity \gamma(t)} = \covat{\gamma(t)}{\nabla F(\gamma(t))}{\velocity \gamma(t)} \, .
    \end{equation*}
\end{proof}

With Definition~\ref{def:lagrangian} below we introduce a differentiable structure on the statistical bundle by following the standard formalism of mechanics (\cite{landau|lifshits:1976} and~\cite{chirco|malago|pistone:2022}).
\begin{defn}[Statistical Lagrangian] \label{def:lagrangian}
\
\begin{enumerate} 
\item A Lagrangian is a mapping
\begin{equation*}
 \Lambda \colon \expbundleat \Omega \ni (p,u) \mapsto \Lambda(p,u) \in \reals \, .
 \end{equation*}
\item 
The Lagrangian is statistical if it is compatible with the equivalence relation of Equation~\eqref{eq:equivalence}, that is,
\begin{equation*}
    (u - v)p=0 \quad \text{ implies } \quad \Lambda(p,u) = \Lambda(p,v) \, .
\end{equation*}
\end{enumerate} 
\end{defn}

If $t \mapsto (\gamma(t),u(t)) \in \expbundleat \Omega$ is a differentiable curve and $\Lambda$ is a differentiable function on an open set containing the statistical bundle, then
\begin{equation*}
    \derivby t \Lambda(\gamma(t),u(t)) = \expectat{\gamma(t)}{\nabla_1 \Lambda(\gamma(t),u(t)) \velocity \gamma(t)} + \nabla_2 \Lambda(\gamma(t),u(t)) \cdot \dot u(t)) \,
\end{equation*}
where $\nabla_1$ denotes the partial derivative with respect to the first argument of $\Lambda$,  and  $\nabla_2$ denotes the partial derivative with respect to the second argument.
 
\begin{ex}
Consider $\Lambda (p,u) = K_p(u) = \log \expectat p {\euler^u}$ with $(p,u) \in \expbundleat \Omega$. If $(u-v) \ p=0$, then $\Lambda(p,u) = \Lambda(p,v)$, hence the cumulant functional is a statistical Lagrangian. If $t \mapsto (\gamma(t),u(t))$ is differentiable, then
\begin{align*}
    \derivby t K_{\gamma(t)}(u(t)) & = {\expectat {\gamma(t)} {\euler^{u(t)}}}^{-1} \sum_x \derivby t \gamma(x;t) \euler^{u(t)} \\ &= \euler^{- K_{\gamma(t)}(u(t))}
    \sum_x (\dot \gamma(x;t) \euler^{u(x;t)} + \gamma(x;t) \euler^{u(x;t)} \dot u(x;t)) \\ &= \expectat {\gamma(t)}{\velocity \gamma(t) \euler^{u(t)- K_{\gamma(t)}(u(t))} + \euler^{u(t)- K_{\gamma(t)}(u(t))} \dot u(t)} \, .
\end{align*}

In the equation above, the factor $\gamma^u(t) = \euler^{u(t)- K_{\gamma(t)}(u(t))} \cdot \gamma(t)$ is a new model. It holds
\begin{equation*}
\derivby t K_{\gamma(t)}(u(t)) = \expectat{\gamma^u(t)}{\velocity \gamma(t) + \dot u(t)} \ .
\end{equation*}
The algebraic properties of the derivatives of the cumulant function have been discussed in~\cite{pistone|wynn:99} and~\cite{pistone|wynn:2006}. In the case $u = \velocity \gamma$, we have a notion of acceleration. We do not discuss this here and refer to the tutorial~\cite{chirco|pistone:2022}.
\end{ex}

Given $p$, consider the vector space $J_p$ of all $k \in  \mathbb R^\Omega$ such that $k \ p=0$. Then $u$ and $v$ represent the same score if, and only if $u-v \in J_p$. 
From an algebraic perspective, the reduced statistical bundle in Equation~\eqref{eq:red-statistical-bundle} is a semi-algebraic variety. Indeed, let $\reals[p,u]$ be the ring of polynomials with indeterminates $p(x),u(x), x \in \Omega$. The statistical bundle is given by the zero set of the polynomials
\begin{gather*}
    \sum_x p(x) - 1 \\
    \sum_x u(x)p(x) 
\end{gather*}
and by the inequalities $p(x) \geq 0$. Note that $J_p$ is an ideal, so that
\begin{equation*}
    S\Delta(\Omega) = V(\sum_x p(x) - 1, \sum_x u(x)p(x), p(x) \geq 0) / J_p 
\end{equation*}

\section{Natural gradient} \label{sec:natgradient}

The extension of Fisher score to the closed simplex allows for the generalization of well-known differential computations in mathematical statistics. A notable example is the Cramer-Rao inequality,~\cite{rao:45}, which provides a statistical bound to the variance of any estimator with respect to a given one-parameter model $\gamma(t)$. The position $\dot \gamma = \velocity \gamma \gamma$ in Theorem~\ref{prop:score} is the key passage in the derivation of the inequality and, more generally, the computation of the natural gradient of~\cite{amari:1998natural} and~\cite{amari|nagaoka:2000}.

\begin{prop} \label{prop:CR}
Let $t \mapsto \gamma(t)$ be a differentiable curve in the probability simplex and $g$ a random variable on $\Omega$. 
Then
\begin{equation*}
    \left( \frac d {dt} \expectat{\gamma(t)}{g}\right)^2 \leq \varat{\gamma(t)} {\velocity \gamma(t)} 
    \
    \varat{\gamma(t)}{g} \, .
\end{equation*}
\end{prop}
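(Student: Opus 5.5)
The argument is the classical Cramér--Rao computation, with the open-simplex score $\dot\gamma/\gamma$ replaced by the generalised score of Theorem~\ref{prop:score}. Fix $t$ and write $p=\gamma(t)$ and $s=\velocity\gamma(t)$. By Theorem~\ref{prop:score} and Definition~\ref{def_Velocity} we have $\dot\gamma(t)=s\,p$ componentwise, and this holds even when $p$ lies on the boundary of the simplex.

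First, since $\Omega$ is finite, the expectation $\expectat{\gamma(t)}{g}=\sum_x g(x)\gamma(x;t)$ is a finite sum of differentiable functions. Differentiating term by term gives
\begin{equation*}
\frac{d}{dt}\expectat{\gamma(t)}{g}=\sum_x g(x)\dot\gamma(x;t)=\sum_x g(x)s(x)p(x)=\expectat p{gs}.
\end{equation*}

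Second, we use that the score is centred: $\expectat p s=\sum_x\dot\gamma(x;t)=0$, as shown after Definition~\ref{def_Velocity}. Hence $\expectat p{gs}=\expectat p{(g-\expectat p g)\,s}=\covat p g s$.

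Third, we apply the Cauchy--Schwarz inequality to the semi-inner product $(u,v)\mapsto\expectat p{uv}$. It holds for any positive semidefinite bilinear form, so we do not need $p$ to have full support. This gives
\begin{equation*}
(\covat p g s)^2\le \varat p g\,\varat p s,
\end{equation*}
and squaring the identity from the first two steps yields the stated bound.

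The only delicate point is that $s$ is not unique off $\supp p$. This causes no difficulty, because every quantity involved (the derivative, the covariance, and both variances) is computed with weights $p(x)$. These quantities therefore depend only on the class of $s$ under the equivalence relation~\eqref{eq:equivalence}, so the inequality holds for any representative. I would state this explicitly in the proof.
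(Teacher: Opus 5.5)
Your proposal is correct and is essentially the paper's proof. You write the derivative of the finite sum as $\sum_x g(x)\velocity\gamma(x;t)\gamma(x;t)$, use the centring $\sum_x \velocity\gamma(x;t)\gamma(x;t)=\sum_x\dot\gamma(x;t)=0$ to rewrite it as $\covat{\gamma(t)}{\velocity\gamma(t)}{g}$, and conclude by Cauchy--Schwarz. Your remarks that Cauchy--Schwarz only needs a positive semidefinite form, and that every quantity ignores the arbitrary values of the score off $\supp\gamma(t)$, are useful clarifications that the paper leaves implicit.
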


\begin{proof}
Let $G(t) = \expectat{\gamma(t)}{g}$ be the expected value of $g$. Then
\begin{equation*}
\dot G(t) = \sum_x \dot \gamma(x;t) g(x) = \sum_x \velocity \gamma(x;t) g(x) \gamma(x;t) = \sum_x \velocity \gamma(x;t) (g(x) - \expectat{\gamma(t)}{g})\gamma(x;t) = \covat {\gamma(t)}{\velocity \gamma(t)}{g}  \, .
\end{equation*}
This follows from the fact that $\dot\gamma(x;t)$ is in the tangent bundle:
\[
0 = \sum_x \velocity \gamma(x;t)  \gamma(x;t) = \sum_x \dot \gamma(x;t) 
\]
and the result follows by applying the Schwartz inequality.
\end{proof}

More generally, let $G \colon \Delta(\Omega) \to \reals$ be a statistical estimator such that $G(\gamma)$ is differentiable. Then by Proposition~\ref{prop:CR} we can write:
\begin{align*} 
     \derivby t G(\gamma(t)) & =  \sum_x \frac {\partial \, G(\gamma(t))} {\partial \gamma(x)} \,  \dot \gamma(x,t) = \sum_x \frac{\partial \, G(\gamma(t)) } {\partial \gamma(x)} \,  \velocity \gamma(x,t) \ \gamma(x,t) 
     \\ & = \expectat {\gamma(t)} {(\nabla G(\gamma(t)) - \expectat {\gamma(t)}{\nabla G(\gamma(t)) }) \, \velocity \gamma(t)} \\
     & = \scalarat{ {\gamma(t)} }{{\nabla G(\gamma(t)) - \expectat {\gamma(t)}{\nabla G(\gamma(t)) }}}{{\ \velocity \gamma(t)}}
     \, . 
\end{align*}
where the last equality follows form the fact $\scalarat \gamma u v = \expectat \gamma {uv}$, for any $u,v$ such that $\expectat \gamma u = \expectat \gamma v = 0$. This leads to the following definition.

\begin{defn}[Natural gradient]
Let $G \colon \Delta(\Omega) \to \reals$ be such that $t \mapsto G(\gamma(t))$ is infinitely differentiable for all  differentiable
curves $\gamma$.
The natural gradient of $G$, $\Grad G$, is a section of the statistical bundle, $(\gamma,\Grad G(\gamma)) \in \expbundleat \Omega$ such that
\begin{equation}
    \label{eq:natural-gradient}
    \derivby t G(\gamma(t)) = \scalarat {\gamma(t)} {\Grad G(\gamma(t)) \ }{ \ \velocity \gamma(t)}
\end{equation}
\end{defn}
The natural gradient is the gradient with respect to the relative velocity described by Fisher score and the scalar product given by the covariance.
In the section below, through examples, we show that for monomial models these operations are extended to the closed simplex via algebra. 

\subsection{Examples} 
\begin{ex} [Follows from \Cref{ex:entropy-1}] \label{ex:entropy-2}
  \begin{figure}[t]
    \centering
    \begin{tabular}{cc}
    \includegraphics[width=.4\linewidth]{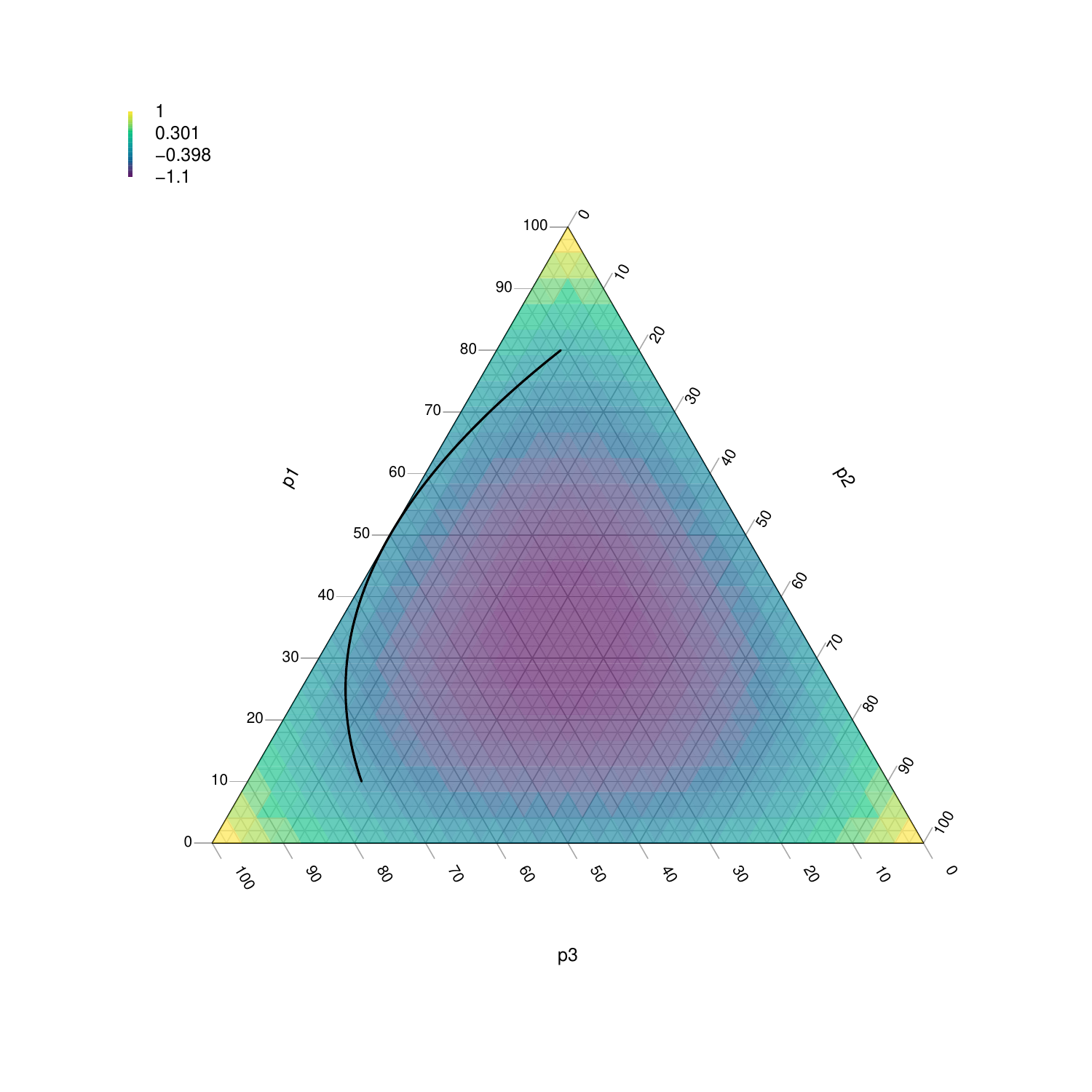}    & \includegraphics[width=.4\linewidth]{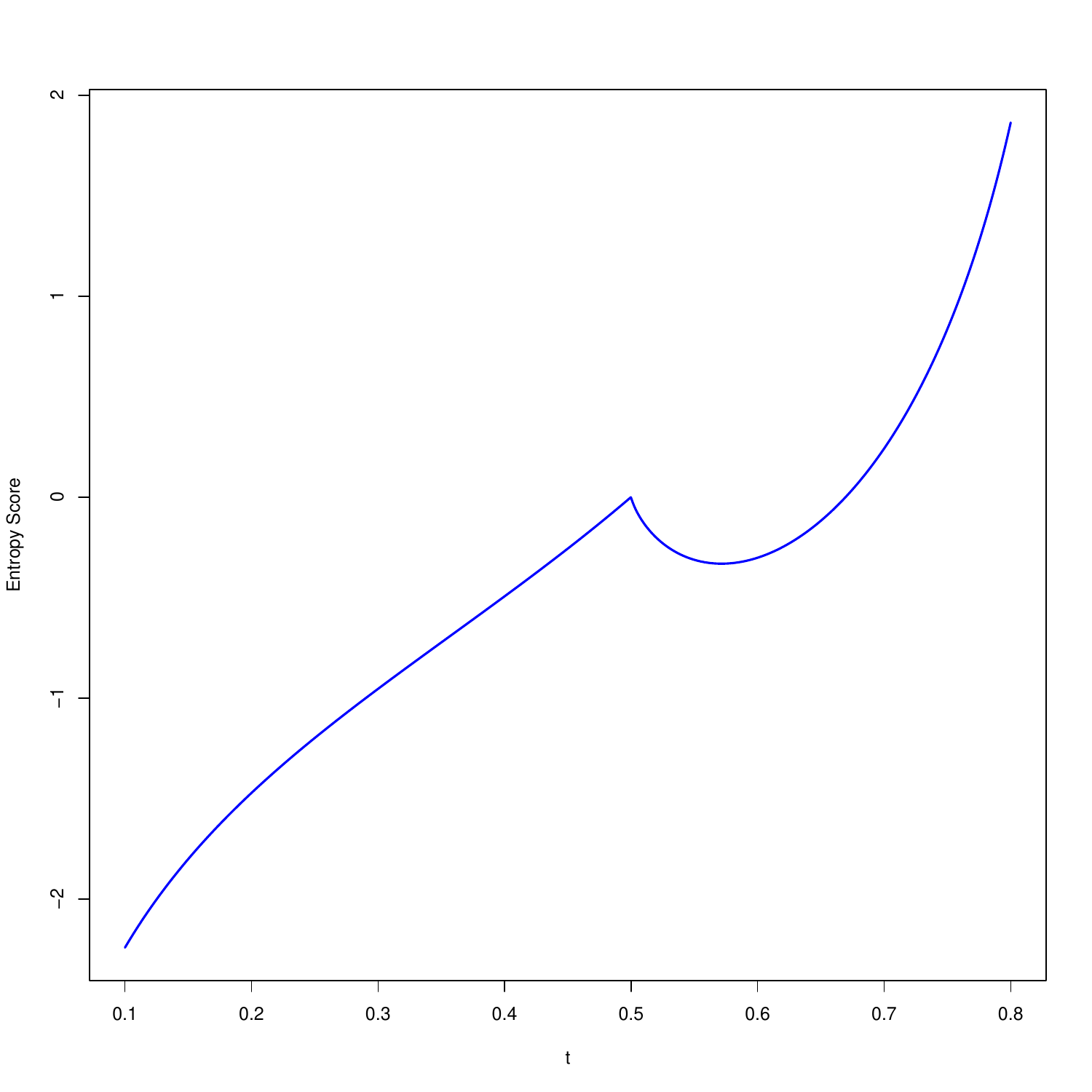}
    \end{tabular}
\caption{\Cref{ex:entropy-2}. Entropy with a defective support. Left pane: heat map of the entropy. Right pane: entropy production.}
    \label{fig:entropy}
  \end{figure}    
 Assume $\gamma$ is infinitely differentiable and set $ l(\gamma) = - \gamma \log\gamma  $. Then as shown in~\Cref{ex:entropy-1} the entropy $l$ is differentiable and the entropy production is
    \begin{equation*}
       \derivby t l(\gamma(x;t)) = -(\log(\gamma(x;t))+1) \ \velocity \gamma(x;t) \ \gamma(x;t) \, .
    \end{equation*}
Here we compute the natural gradient of the entropy along a curve $\gamma$, where $\gamma(t) \in\Delta\Omega$ and the entropy is defined as $H(\gamma(t)) = - \sum_{x\in \Omega} \gamma(x;t) \log \gamma(x;t)$ for all $t$. 

Summing over $x$, it holds
    \begin{align*}
\derivby t H(\gamma(t))  & = - \sum_x \left(\log(\gamma(x;t))+1\right) \ \velocity \gamma(x;t)  \ \gamma(x;t) 
= \expectat {\gamma(t)}{-(\log \gamma(t)+1) \velocity \gamma(t)} \\
& = \expectat {\gamma(t)}{\left(-(\log \gamma(t)+1) - \expectat {\gamma(t)} {-(\log \gamma(t)+1)}\right) \velocity \gamma(t)} 
\\ &=
\scalarat{\gamma(t)}{-\log \gamma(t) - H(\gamma(t)) \ }{\ \velocity \gamma(t)} 
    \end{align*}
Note that 
$( -\log \gamma(t) - H(\gamma(t))\ , \ \velocity \gamma ) \in S\Delta(\Omega)$ and we used the fact that that $\expectat {\gamma(t)}{ \expectat {\gamma(t)} {-(\log \gamma(t)+1)} \velocity \gamma(t)} = \expectat {\gamma(t)} {-(\log \gamma(t)+1)} 
\expectat {\gamma(t)}{ \velocity \gamma(t)}
=0$ because $\expectat {\gamma(t)}{ \velocity \gamma(t)}=0$.
Furthermore, notice that the chain of equality shows that $\derivby t H(\gamma(t)) $ is equal to the covariance between $-\log \gamma$ and $\velocity \gamma$.

The natural gradient of the entropy is 
\begin{equation*}
    (\gamma \ , \ \Grad H(\gamma) ) 
     =
   (\gamma \ , \ -\log \gamma - H(\gamma))
    \, 
    \in \expbundleat \Omega \, .
\end{equation*}
Note that $\Grad H(\gamma) = 0$ if $-\log \gamma = H(\gamma)$, that is, $\gamma$ is constant on its support. It is the case of maximum entropy.
\end{ex}

\begin{ex} [Follows from \Cref{ex:entropy-2}] 
Figure~\ref{fig:entropy} provides a numerical example. 
On $\Omega= \set{1,2,3}$, consider the model 
\begin{equation*}
    ].1,.8[ \ni t \mapsto \left(t,\left(t-\frac12\right)^2,\frac34 - t^2\right) \, .
\end{equation*}
with an isolated critical point $\gamma(1/2) = (1/2,0,1/2)$. The score (see Equation~\eqref{eq:score}) is 
\begin{equation*}
    \velocity \gamma(t) = \left(t^{-1},2\left(t-\frac12\right)^{-1},2t\left(t^2-\frac34\right)^{-1}\right) \ , \quad t \neq \frac12 \, .
\end{equation*}
The entropy production is
\begin{equation*}
   - \derivby t H(\gamma(t)) = \log t + 2 \left(t-\frac12\right)\log\left(t-\frac12\right)^2 - 2t \log\left(\frac34-t^2\right) 
 \, .
\end{equation*}
Notice the compensation of the singularity at $t=1/2$.
\end{ex}

\begin{ex}[Boltzmann-Gibbs model] \label{ex:gibbs}
\begin{figure}[t]
    \centering
    \includegraphics[width=.7\linewidth]{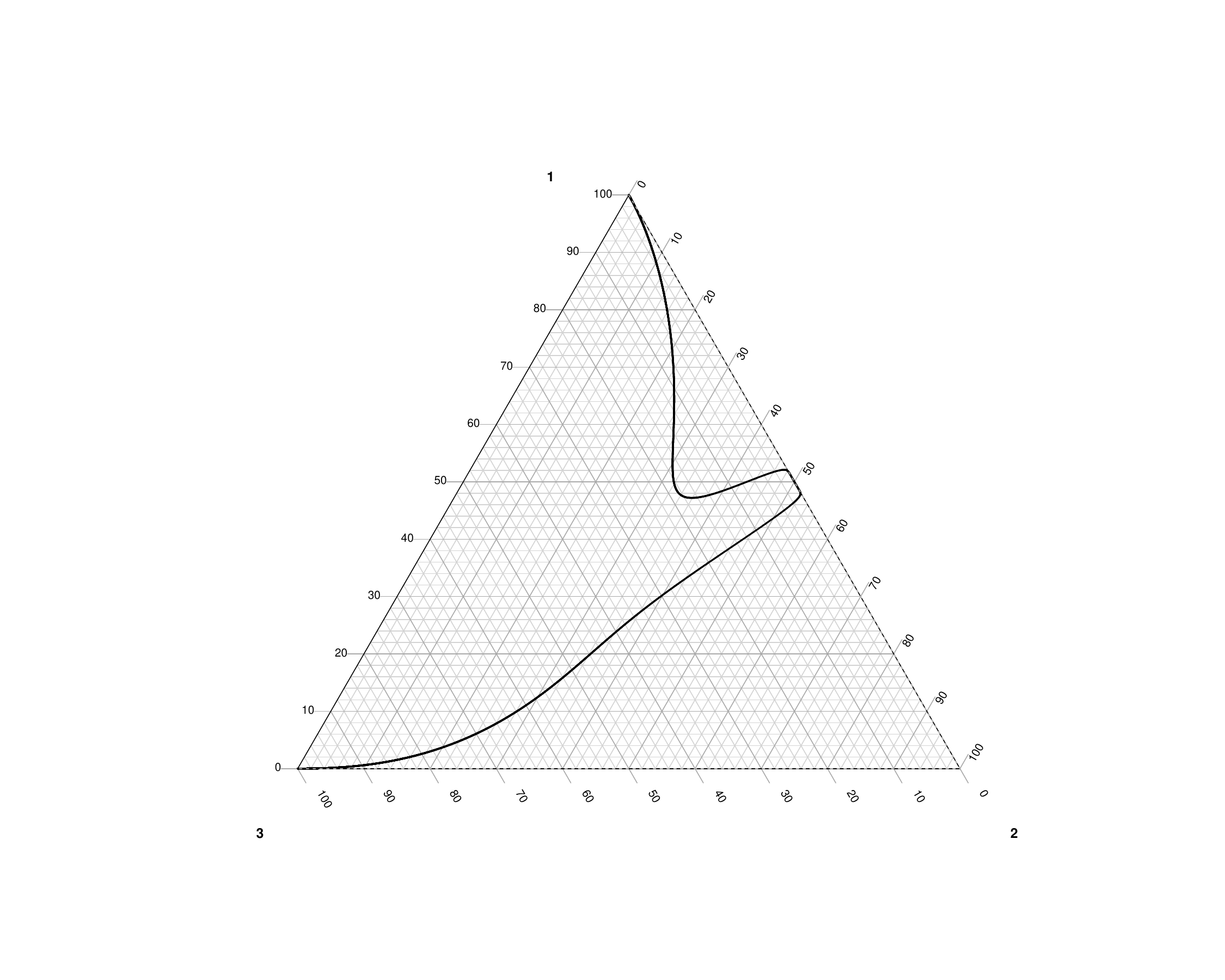}
    \caption{Example~\ref{ex:gibbs} with $U=(0,0,1)$ and $V=(0,1,1.8)$. At $\beta=0$ the curve hits the border point $(1/2,1/2,0)$ with a contact of exponential order. For $\beta\to\pm\infty$, the curve tends to the vertex associated with the maximum and the minimum of $V$, respectively.
    \label{fig:gibbs}}
  \end{figure}
  \Cref{fig:gibbs} is an example of a smooth curve in the probability simplex, illustrating two ways of hitting the boundary. The asymptotic behaviour at vertices 1 and 3 is typical of exponential families, while the tangential hit is a feature of the Boltzmann-Gibbs models in statistical physics; see~\cite[Ch.~3]{landau|lifshits:1980}. Let $U$ be a non-negative random variable with $\min U = 0$ and $V$ any random variable. Consider the model
  on $\Omega=\{1,2,3\}$ 
\begin{equation*}
    \gamma(x;\beta) \propto \expof{ -\beta^{-2} U(x) + \beta V(x)} = (U(x)=0)\expof{\beta V(x)} + (U(x > 0) \expof{ - \beta^{-2} U(x)+\beta V(x)} \ ,
\end{equation*}
where $\beta$ is a non-zero real valued parameter. The normalising constant (called the partition function in physics) is 
\begin{equation*}
    Z(\beta) = \sum_{x\in \Omega} \expof{ -\beta^{-2} U(x) + \beta V(x)} = \sum_{U(x)=0}\expof{\beta V(x)} + \sum_{U(x)>0} \expof{ -\beta^{-2} U(x) + \beta V(x)} \, .   \end{equation*}
Let $\#A$ indicate the number of elements of the set $A$ and $(U(x)=0)$ the indicator function of the event $U(x)=0$. As the limits
\begin{equation*}
    \lim_{\beta \to 0} \expof{-\beta^{-2}U+\beta V} = (U=0) \quad \text{and} \quad \lim_{\beta\to 0} Z(\beta) = \#\{ U=0 \} \ ,
\end{equation*}
 exist, we can extend the model to $\beta=0$ by
\begin{equation*}
    \gamma(x;\beta) = \begin{cases}
        \expof{ -\beta^{-2} U(x) + \beta V - \log Z(\beta)} & \text{if $\beta \neq 0$, and} \\
        \lim_{\beta \to 0} \gamma(x;\beta) = \displaystyle \frac{(U(x)=0)}{\#\set{U=0}} & \text{if $\beta=0$,}
    \end{cases}
\end{equation*}
so that the curve
\begin{equation*}
    \reals \ni \beta \mapsto \gamma(\beta) \in \simplexof{\Omega}
\end{equation*}
is continuous. The derivative in $\beta$ of the cumulant function $\psi(\beta) = \log Z(\beta)$ is
\begin{equation*}
    \dot\psi(\beta) = \expectat \beta { 2 \beta^{-3} U + V}
\end{equation*}
and
\begin{equation*}
    \lim_{\beta \to 0} \dot \psi(\beta) = \lim_{\beta \to 0} Z(\beta)^{-1}\sum_{x} \euler^{-\beta^{-2}U(x) + \beta V(x)}(2\beta^{-3} U(x) + V(x)) = 0 \ . 
\end{equation*}
Notice that we have used the exponential growth, that is, $\lim _{\alpha \to \infty} \euler^{-\alpha U(x) }\alpha^{3/2} U(x) = 0$ with $\alpha=1/\beta^2$ to show the curve is differentiable with derivative 
\begin{equation*}
    \dot \gamma(x,\beta) = \begin{cases}
    \gamma(x;\beta)\left( (2 \beta^{-3} U(x) + V(x)) - \expectat \beta {2 \beta^{-3} U + V}  \right) & \text{if $\beta \neq 0$, and} \\
    0 & \text{if $\beta=0$,}
    \end{cases}
\end{equation*}

By inspection, we see that the Fisher score is
\begin{equation*}
    \velocity \gamma(x;\beta) = \begin{cases} 2 \beta^{-3}(U(x) - \expectat \beta U) + (V(x) - \expectat \beta V)
 & \text{if $\beta \neq 0$,} \\
 0  & \text{if $\beta=0$ and $U(x)=0$,} \\
  \text{anything} & \text{if $\beta=0$ and $U(x)>0$.}\end{cases}
\end{equation*}

The running variance of the Fisher score, that is, the Fisher information, is
\begin{equation*}
    \expectat {\gamma(\beta)} {{\velocity \gamma(\beta)}^2} = \sum_{\velocity \gamma(x;\beta) >0 } \frac{{\dot \gamma(x;\beta)}^2}{\gamma(x;\beta)} 
\end{equation*}
vanishes for $\beta \to 0$, so that the model is singular; see, for example,~\cite{watanabe:2009}.

The triplot in Figure~\ref{fig:gibbs} is the case where $U=(0,0,1)$ and $V = (0,1,1.8)$. The model is infinitely differentiable in the parameter and cannot be reduced to an algebraic form. If we take $U=0$, the $V$-part is exponential, hence monomial in the parameter $\theta_1=\euler^{\beta}$ as in~\cite[Ch.~5]{pistone|riccomagno|wynn:2001}. If we take $V=0$ with the parametrization $\theta_2 = \euler^{-1/\beta^2}$, then the model becomes a monomial statistical model. If the two monomial models are denoted $\gamma_i(\theta_i)$, $i = 1,2$, 
our example can be seen as a constrained monomial model
\begin{equation*}
\gamma(\theta_1,\theta_2) \propto \gamma_1(\theta_1)\gamma_2(\theta_2) \ , \quad \text{subject to $\log^2 \theta_1 \log \theta_2 +1$ = 0.} 
\end{equation*}
\end{ex}

\begin{ex} (mixture of densities) Given $p,q \in \Delta(\Omega)$, consider the mixture 
\begin{equation*}
    \gamma(t) = (1-t)p + t q
    \qquad t\in[0,1]
\end{equation*}
In this case, Equation~(\ref{eq:score}) becomes
\begin{equation*}
    q - p = s(t) \left( p + t(q-p) \right) \, .
\end{equation*}
At $t = 0$ we get
$    q - p = s \ p $
and at $t = 1$ we get
$   q - p = s \ q $.

For $t \in ]0,1[$, $\supp \gamma(t) = \supp p \cup \supp q$, which does not depend on $t$,  and the score is
\begin{equation*}
    s(x;t) = \begin{cases}
        \derivby t \log(p(x) + t (q(x) - p(x))) = 
         \displaystyle
        \frac{q(x)-p(x)}{p(x) + t (q(x) - p(x))}
        & \text{if } x \in \supp p \cup \supp q \\
        \text{arbitrary} & \text{otherwise}
    \end{cases} 
\end{equation*}
For $t=1$, $\supp \gamma(1) = \supp q$ and 
the score is 
$\frac{q(x)-p(x)}{ q(x) }$ if $x\in \supp q$ and arbitrary otherwise. 
Analoguosly
$\supp \gamma(0) = \supp p$ and 
the score is 
$\frac{q(x)-p(x)}{ p(x) }$ if $x\in \supp p$ and arbitrary otherwise. 

\end{ex}

\section{Geodesics} \label{sect:parallel}

In the ordinary geometry of the open probability simplex the equations $\ddot \gamma(t)=0$, $\dot \gamma(t) = \dot \gamma(0)$, and $\gamma(t) = \gamma(0) + t \dot \gamma(0)$ are equivalent and define special curves, affinely parametrised, which are usually called geodesics. The same properties hold in general affine spaces, in particular in the statistical bundle as defined in Section~\ref{sec:statistical-bundle}. The main difference is the fact the tangent spaces at different points are distinct, and we require a weaker notion of constant velocity and a special choice of coordinates. This is solved by introducing mappings between the fibres that provide an identification by "transporting" a score at a point $p$ to a score at a point $q$. We consider the special cases that are more natural in Statistics, the exponential and affine transports. See,~\cite{amari:87} and~\cite{kass|vos:1997}, where the fibres are not centred and the name is Hilbert bundle. See also the tutorials~\cite{chirco|pistone:2022} and~\cite{pistone:2020-NPCS}.

As in \cref{def:displacement}, for all $p,q \in \Delta(\Omega)$, the \emph{exponential parallel transport} is
\begin{equation*}
    \label{eq:parallel-transport}
    \etransport p q v = v - \expectat q v \ .
\end{equation*}    
The exponential parallel transport trivially satisfies
\begin{gather*}
\etransport p q \colon \expfiberat p \Omega \to \expfiberat q \Omega \ , \\
    \etransport q r \etransport p q = \etransport p r \ , \quad \etransport p p = I \ .
\end{gather*}    

An \emph{exponential geodesic} with initial velocity $\velocity q(0)$ and initial position $q(0)$ has constant velocity when we identify the velocity spaces via the transport,
\begin{equation*}
    \velocity q(t) = \etransport {q(0)} {q(t)} \velocity q(0) \ ,
\end{equation*}
that is,
\begin{equation} \label{eq:e-geodesic}
    \dot q(t) = \left(\etransport {q(0)}{q(t)} \velocity q(0)\right) q(t) = \left(\velocity q(0) - \expectat {q(t)} {\velocity q(0)}\right) q(t) \, .
\end{equation}
The solution of Equation~(\ref{eq:e-geodesic}) with the given initial conditions is the exponential family
\begin{equation} \label{eq:e-geodesic-solution}
    q(t) = \euler^{t \velocity q(0) - \psi(t)} \cdot q(0) \, .
\end{equation}

Notice that $\supp q(t) = \supp  q(0)$, hence
\begin{equation*}
    \psi(t) = \sum_{q(0) > 0} q(0) \log \frac {q(0)}{q(t)} = \KL{q(0)}{q(t)} \, .
\end{equation*}
If we take $t = 1$, then the \emph{exponential displacement} $s_p(q)$ from $p$ to $q$ is defined by
\begin{equation*}
    \label{eq:e-displacement}
    q = \euler^{s_p(q) - \KL{p}{q}} \cdot p \quad \text{and} \quad \expectat p {s_p(q)} = 0 \, .
\end{equation*}
That is,
\begin{equation*}
    \label{eq:exponential-chart}
    s_p(q) = \begin{cases}
        \log \frac q p - \expectat p {\log \frac q p} & \text{on $\supp p$} \\ \text{arbitrary} & \text{otherwise}
    \end{cases}
\end{equation*}
provided $\supp p = \supp q$.

Conversely, in the chart, $s_{q_0}(q(t)) = t \ \velocity q_0$ shows the affine form. For the second derivative, we compute 
\begin{equation*}
\lim_{h \to 0} h^{-1} \left(\etransport {q(t+h)}{q(t)} \velocity q(t+h) - \velocity q(t)\right) \ ,
\end{equation*}
which is zero for the geodesic.

\cite{amari|nagaoka:2000} have introduced the dual of the exponential transport. The covariance is a metric on $\expbundleat{\Omega}$, defined as
\begin{equation}
    \label{eq:duality}
    (q,v,w) \mapsto \scalarat q v w = \covat{q}{v}{w} \ , \quad q \in \Delta(\Omega), v,w \in \expfiberat q {\Omega} \, .
\end{equation}

For all $(p,u) \in \expfiberat p {\Delta(\Omega)}$, and $(q,w) \in \expfiberat q {\Delta(\Omega)}$, assume  $q \mtransport p q u = pu$ and compute,
 \begin{align*}
  \scalarat q {\mtransport p q u} {w} = & \sum_x q(x) \mtransport p q u(x)w(x) = \sum_x p(x) u(x) w(x) 
  \\ = & 
  \sum_x p(x) u(x) (w(x) - \expectat p w) = \scalarat p u {\etransport q p w}   
 \end{align*}
 where $ \etransport q p w = (\supp p)(w - \expectat p w)  $. 

Next, fix a generic $(p,u)\in S\Delta(\Omega)$ and consider the equation for $(q,v)\in \in S\Delta(\Omega)$
\begin{equation*}
    pu = qv
\end{equation*}
We have
\begin{enumerate}
    \item If $x \in \supp p^c \cap \supp q^c$, the equation becomes $0=0$ 
    \item \label{item:transport-2} If $x \in \supp p \cap \supp q ^c$, the equation becomes $p(x)u(x) = 0$, that is, $u(x)=0$
    \item \label{item:transport-3} If $x \in \supp p^c \cap \supp q$, the equation becomes $q(x)v(x)=0$, that is, $v(x) = 0$
    \item \label{item:transport-4} If $x \in \supp p \cap \supp q$, then $v = \frac pq u$
\end{enumerate}

If we drop the inconsistent Item~\ref{item:transport-2}, that is, we assume $\supp p \subset \supp q$, then Item~\ref{item:transport-3} and Item~\ref{item:transport-4} become
 \begin{align*}
     v(x) &= 0 & x \in \supp q \setminus \supp p \\
     v(x) &= \frac pq u & x \in \supp p
 \end{align*}

 In conclusion, define $\mtransport p q S_p\Delta(\Omega) \to S_q\Delta(\Omega)$ for $\supp p \subset \supp q$ with
$ pu = q\mtransport p q u $ 
to include the boundary of $\Delta$.

\subsection{The score as a velocity in the moving frame}

To conclude this section we observe that the score equals the affine velocity in the moving frame provided the support is constant.
Consider a curve $t \mapsto \gamma(t)$ with constant support $\supp{\gamma(t)}=S$. Then, for all $p\in \Delta(\Omega)$ such that $\supp p = S$ 
\begin{equation*}
    \derivby t s_p(\gamma(t)) = \frac{\dot\gamma(t)}{\gamma(t)} - \expectat p {\frac{\dot\gamma(t)}{\gamma(t)}} \quad \text{on $S$, and $0$ otherwise.}
\end{equation*}
In the \emph{moving frame}, $p = \gamma(t)$, so that
\begin{equation*}
\left.\derivby t s_p(\gamma(t))\right|_{p=\gamma(t)} = \frac{\dot \gamma(t)}{\gamma(t)} - \expectat {\gamma(t)} {\frac{\dot \gamma(t)}{\gamma(t)}} = \velocity \gamma(t) \ .
\end{equation*}
That is, first compute the derivative with respect to a generic origin of coordinates, then put the origin at the current point. In the more general case, there is no exponential chart and the score is implicitly defined.

\section{Discussion} \label{sec:final}

The adoption of the affine space defined by the 
displacement $(p,q) \mapsto \log \frac q p - \expectat p {\log \frac q p}$ 
allows for a proper calculus and the discussion of statistical models as differentiable varieties. 
Algebraic statistics is not unfamiliar with affine objects such as Segre varieties and Veronese varieties that have a clear statistical interpretation. Indeed, monomial statistical models and their toric counterparts are represented in the affine coordinates as affine subspaces.
In this paper we argued that the interplay between the differential approach and the algebraic statistics not only provides the mathematical tools needed by Fisher statistical methodology, but also allows the inclusion of boundary points. 

Indeed the differential theory applies to the open probability simplex, meaning that all statistical models must have a constant support. This paper provided tools to extend the theory to the closed probability simplex. It is a step towards the development of  analytical and algebraic methods for treating models, such as contingency tables, that are smooth yet exhibit zeros. The key object is the Fisher score. Every smooth 1-dimensional model $\gamma(t)$ has a derivative of the form $\dot \gamma(t) = \velocity \gamma(t) \gamma(t)$, where $\velocity \gamma(t)$ is the score and can be used as a velocity in the affine geometry. This paper demonstrated the feasibility of the Fisher construction for the closed simplex. Specifically, we showed that a model can hit the boundary of the simplex while remaining smooth, and that common analytic algorithms have an algebraic, hence computable, form in the affine setting. An example is the gradient flow minimisation algorithm that requires a proper definition of gradient and of velocity.

Statistical models are traditionally presented in parametric form and, more rarely, in implicit form. The explicit introduction of a larger structure, the statistical bundle, that includes both probability functions and scores, provides what, in physics, is called a state space. A state space provides an additional way to describe statistical models as solutions to a system of (possibly implicit) differential equations. An example is the notion of a geodesic. The presentation of a model in differential form has direct applications to modeling (e.g., epidemic models such as the SIR model), to approximate computation (e.g., variational inference), and to physics-aware modeling (e.g., via models inspired by Lagrangian mechanics).

\bibliographystyle{plainnat}
\bibliography{tutto} 

\end{document}